\newtheorem{theorem}{Theorem}[section]
\newtheorem{lemma}[theorem]{Lemma}
\newtheorem{proposition}[theorem]{Proposition}
\theoremstyle{definition}
\newtheorem{definition}[theorem]{Definition}
\newtheorem{remark}[theorem]{Remark}
\newtheorem*{acknowledgements}{Acknowledgements}
\numberwithin{equation}{section}
\renewcommand{\phi}{\varphi}
\newcommand{\Aff}{\operatorname{Aff}}
\newcommand{\Aut}{\operatorname{Aut}}
\newcommand{\Coker}{\operatorname{Coker}}
\newcommand{\Ext}{\operatorname{Ext}}
\newcommand{\Hom}{\operatorname{Hom}}
\renewcommand{\Im}{\operatorname{Im}}
\newcommand{\id}{\operatorname{id}}
\newcommand{\Ker}{\operatorname{Ker}}
\newcommand{\sgn}{\operatorname{sgn}}
\newcommand{\simp}{\operatorname{simp}}
\newcommand{\N}{\mathbb{N}}
\newcommand{\Z}{\mathbb{Z}}
\newcommand{\R}{\mathbb{R}}
\newcommand{\C}{\mathbb{C}}
\newcommand{\T}{\mathbb{T}}
\newcommand{\G}{\mathcal{G}}
\renewcommand{\H}{\mathcal{H}}
\title{Cup and cap products for cohomology and homology groups of 
ample groupoids}
\author{Hiroki Matui 
\thanks{This work was supported by JSPS KAKENHI Grant Number 23K22397.}\\
Graduate School of Science \\
Chiba University \\
Inage-ku, Chiba 263-8522, Japan\\
\and
Takehiko Mori \\
Graduate School of Science and Engineering\\
Chiba University \\
Inage-ku, Chiba 263-8522, Japan
}
\date{}
\begin{document}
\maketitle

\begin{abstract}
This paper explores the cup and cap products 
within the cohomology and homology groups of ample groupoids, 
focusing on their applications and fundamental properties. 
Ample groupoids, 
which are \'etale groupoids with a totally disconnected unit space, 
play a crucial role in the study of 
topological dynamical systems and operator algebras. 
We introduce the cup product, which defines a bilinear map 
on cohomology classes, providing a graded ring structure, 
and the cap product, which defines a bilinear map 
relating homology and cohomology. 
The paper aims to make these concepts accessible to 
a broader mathematical audience, 
offering clear definitions and detailed explanations. 
We also demonstrate an application of the cap product 
in the analysis of automorphisms of groupoid $C^*$-algebras. 
Specifically, we show 
how it helps determine the asymptotic innerness of automorphisms. 
Our results include the first explicit computations of cup products 
in the cohomology of tiling spaces, 
which may pave the way for new research in this area. 
\end{abstract}

\section{Introduction}

The study of ample groupoids and their cohomology and homology groups 
has emerged as a significant area of research and 
has numerous applications in the analysis of dynamical systems, 
operator algebras, and noncommutative geometry. 
Topological groupoids and their cohomology theory were introduced 
by Renault in \cite{Re_text}, 
providing a framework to understand the interplay 
between topological dynamical systems and $C^*$-algebras. 
A topological groupoid is said to be \'etale 
when the range and source maps are local homeomorphisms, 
and an \'etale groupoid is said to be ample 
when its unit space is totally disconnected. 
The homology theory of ample groupoids has been studied extensively, 
particularly following the foundational works of the first named author 
\cite{Ma12PLMS,Ma15crelle}. 
In particular, 
significant progress has been made regarding the HK conjecture, 
which examines the relationship between the homology groups and 
the $K$-groups of $C^*$-algebras. 
Numerous results have been obtained by various researchers in this direction, 
e.g.\ \cite{Li15JFA,Ma16Adv,FKPS19Munster,Or20JNG,Yi20BullAust,
OS22MathScand,PY22ETDS,BDGW23PLMS,OS23GGD,Ta2304arXiv}. 

In this paper, we explore the cup and cap products 
within the cohomology and homology groups of ample groupoids. 
The cup product, denoted by $\smile$, gives bilinear maps  
\[
H^n(\G,\Z)\times H^m(\G,A)\to H^{n+m}(\G,A)
\]
for $n,m\geq0$ (Theorem \ref{defofcup}), 
and the cap product, denoted by $\frown$, gives bilinear maps  
\[
H_n(\G,\Z)\times H^m(\G,A)\to H_{n-m}(\G,A)
\]
for $n\geq m\geq0$ (Theorem \ref{defofcap}). 
These products are fundamental and important in algebraic topology. 
As is well-known, the cup product provides a way 
to multiply cohomology classes, 
thereby giving the cohomology a graded ring structure. 
Similarly, the cap product is crucial 
for leading to Poincar\'e duality for closed orientable manifolds. 
Introducing the cup and cap products for ample groupoids is important 
as it enhances the invariants 
provided by the cohomology and homology of these groupoids. 
These operations deepen our understanding of 
cohomology and homology groups and 
may be useful for their computation. 
Furthermore, these products could potentially relate to the $K$-groups of 
$C^*$-algebras, suggesting intriguing future research directions. 

We aim to introduce the cup and cap products in a manner 
that is both thorough and accessible. 
Our goal is to ensure that 
even those who may not be familiar with algebraic topology can 
easily grasp these concepts. 
We will carefully explain the definitions and 
discuss the fundamental properties of these products, 
making this material approachable for a broad mathematical audience. 
Next, as one application of the cap product, 
we will show how it naturally arises in the analysis of 
automorphisms of groupoid $C^*$-algebras. 
As observed in \cite[Section 5]{Re_text}, 
a cocycle $\xi:\G\to\T$ (i.e.\ a homomorphism) gives rise to 
an automorphism $\alpha_\xi$ of the groupoid $C^*$-algebra $C^*_r(\G)$, 
and $\alpha_\xi$ is inner if and only if 
the cohomology class of $\xi$ is zero in $H^1(\G,\T)$. 
In many concrete settings, 
the homomorphism $\xi:\G\to\T$ often lifts to a homomorphism $\G\to\R$, 
for example when $\G$ arises an action of $\Z^N$. 
In such a case, $\xi$ is homotopic to the constant map within cocycles, 
which implies that 
$\alpha_\xi$ is homotopic to the identity in $\Aut(C^*_r(\G))$. 
Then, it is natural to ask 
whether the automorphism $\alpha_\xi$ is asymptotically inner. 
Using the cap product, we introduce an invariant for $\xi$ 
which determines asymptotic innerness of $\alpha_\xi$ 
(Proposition \ref{asympinner} and Theorem \ref{asympinnerZ2}). 

This paper is structured as follows. 
In Section 2, we review preliminary concepts related to ample groupoids, 
including definitions and basic properties. 
Section 3 is devoted to the definition and fundamental properties of 
cup and cap products of homology and cohomology groups for ample groupoids. 
Section 4 discusses an application of the cap products 
for automorphisms of groupoid $C^*$-algebras. 
Finally, in Section 5, we present several examples 
to illustrate the applicability and usefulness of these products. 
In particular, we explicitly compute the cup products 
for the cohomology of the Penrose tiling and the Ammann tiling. 
To the best of our knowledge, 
this is the first time the cup products 
for the cohomology of tiling spaces have been computed. 
We believe that these results will open new avenues 
for research in the cohomology of tiling spaces.

\begin{acknowledgements}
We would like to thank the anonymous referee 
for many helpful comments, 
including pointing out prior work by Crainic and Moerdijk 
on the cohomology and homology of \'etale groupoids, 
as well as known issues concerning 
the proof of the gap labelling conjecture. 
We are also grateful to A. Miller and V. Proietti 
for kindly informing us 
about the current status of the gap labelling conjecture.
\end{acknowledgements}

\section{Preliminaries}

\subsection{Ample groupoids}

The cardinality of a set $A$ is written $\#A$ and 
the characteristic function of $A$ is written $1_A$. 
We say that a subset of a topological space is clopen 
if it is both closed and open. 
A topological space is said to be totally disconnected 
if its connected components are singletons. 
By a Cantor set, 
we mean a compact, metrizable, totally disconnected space 
with no isolated points. 
It is known that any two such spaces are homeomorphic. 

In this article, by an \'etale groupoid 
we mean a second countable locally compact Hausdorff groupoid 
such that the range map is a local homeomorphism. 
(We emphasize that our \'etale groupoids are always assumed to be Hausdorff, 
while non-Hausdorff groupoids are also studied actively.) 
We refer the reader to \cite{Re_text,R08Irish} 
for background material on \'etale groupoids. 
For an \'etale groupoid $\G$, 
we let $\G^{(0)}$ denote the unit space and 
let $s$ and $r$ denote the source and range maps, 
i.e.\ $s(g)=g^{-1}g$, $r(g)=gg^{-1}$. 
A subset $U\subset\G$ is called a bisection 
if $r|U$ and $s|U$ are injective. 
The \'etale groupoid $\G$ has a topology with a basis 
consisting of open bisections. 
For $x\in\G^{(0)}$, 
the set $r(s^{-1}(x))$ is called the orbit of $x$. 
When every orbit is dense in $\G^{(0)}$, $\G$ is said to be minimal. 
For a subset $Y\subset\G^{(0)}$, 
the reduction of $\G$ to $Y$ is $r^{-1}(Y)\cap s^{-1}(Y)$ and 
denoted by $\G|Y$. 
If $Y$ is open, then 
the reduction $\G|Y$ is an \'etale subgroupoid of $\G$ in an obvious way. 

An \'etale groupoid $\G$ is called ample 
if its unit space $\G^{(0)}$ is totally disconnected. 
An \'etale groupoid is ample if and only if 
it has a basis for its topology consisting of compact open bisections. 

Typical examples of \'etale groupoids are transformation groupoids. 
Let $\phi:\Gamma\curvearrowright X$ be an action of 
a countable discrete group $\Gamma$ 
on a locally compact Hausdorff space $X$. 
The transformation groupoid $\G$ is $\Gamma\times X$ 
equipped with the product topology. 
The unit space of $\G$ is given by $\G^{(0)}=\{1\}\times X$ 
(where $1$ is the identity of $\Gamma$), 
with range and source maps 
$r(\gamma,x)=(1,\phi_\gamma(x))$ and $s(\gamma,x)=(1,x)$. 
The unit space $\G^{(0)}$ is often identified with $X$. 
Multiplication is given by 
$(\gamma',\phi_\gamma(x))\cdot(\gamma,x)=(\gamma'\gamma,x)$, 
and the inverse of $(\gamma,x)$ is $(\gamma^{-1},\phi_\gamma(x))$. 
Such a transformation groupoid is always \'etale. 
It is ample if and only if $X$ is totally disconnected. 
Moreover, $\G$ is minimal if and only if the action $\phi$ is minimal 
i.e., for all $x\in X$, 
the orbit $\{\phi_\gamma(x)\mid\gamma\in\Gamma\}$ is dense in $X$.

\subsection{Homology groups and cohomology groups}

Let $A$ be a topological abelian group. 
For a locally compact Hausdorff space $X$, 
let $C(X,A)$ be the set of $A$-valued continuous functions. 
We denote by $C_c(X,A)\subset C(X,A)$ 
the subset consisting of functions with compact support. 
With pointwise addition, 
$C(X,A)$ and $C_c(X,A)$ are abelian groups. 

Let $\pi:X\to Y$ be a local homeomorphism 
between locally compact Hausdorff spaces. 
For $f\in C_c(X,A)$, we define a map $\pi_*(f):Y\to A$ by 
\[
\pi_*(f)(y):=\sum_{\pi(x)=y}f(x). 
\]
It is not so hard to see that $\pi_*(f)$ belongs to $C_c(Y,A)$ and 
that $\pi_*$ is a homomorphism from $C_c(X,A)$ to $C_c(Y,A)$. 
Besides, if $\pi':Y\to Z$ is another local homeomorphism to 
a locally compact Hausdorff space $Z$, then 
one can check $(\pi'\circ\pi)_*=\pi'_*\circ\pi_*$ in a direct way. 
Thus, $C_c(\cdot,A)$ is a covariant functor 
from the category of locally compact Hausdorff spaces 
with local homeomorphisms 
to the category of abelian groups with homomorphisms. 

Let $\G$ be an \'etale groupoid. 
For $n\in\N$, we write $\G^{(n)}$ 
for the space of composable strings of $n$ elements in $\G$, that is, 
\[
\G^{(n)}:=\{(g_1,g_2,\dots,g_n)\in\G^n\mid
s(g_i)=r(g_{i+1})\text{ for all }i=1,2,\dots,n{-}1\}. 
\]
For $n\geq2$ and $i=0,1,\dots,n$, 
we let $d_i^{(n)}:\G^{(n)}\to\G^{(n-1)}$ be a map defined by 
\[
d_i^{(n)}(g_1,g_2,\dots,g_n):=\begin{cases}
(g_2,g_3,\dots,g_n) & i=0 \\
(g_1,\dots,g_ig_{i+1},\dots,g_n) & 1\leq i\leq n{-}1 \\
(g_1,g_2,\dots,g_{n-1}) & i=n. 
\end{cases}
\]
When $n=1$, we let $d_0^{(1)},d_1^{(1)}:\G^{(1)}\to\G^{(0)}$ be 
the source map and the range map, respectively. 
Clearly the maps $d_i^{(n)}$ are local homeomorphisms. 
Define the homomorphisms $\partial_n:C_c(\G^{(n)},A)\to C_c(\G^{(n-1)},A)$ 
by 
\[
\partial_n:=\sum_{i=0}^n(-1)^id^{(n)}_{i*}. 
\]
The abelian groups $C_c(\G^{(n)},A)$ 
together with the boundary operators $\partial_n$ form a chain complex. 
When $n=0$, we let $\partial_0:C_c(\G^{(0)},A)\to0$ be the zero map. 

\begin{definition}
[{\cite[Section 3.1]{CM00crelle},\cite[Definition 3.1]{Ma12PLMS}}]
\label{homology}
For $n\geq0$, we let $H_n(\G,A)$ be the homology groups of 
the chain complex above, 
i.e.\ $H_n(\G,A):=\Ker\partial_n/\Im\partial_{n+1}$, 
and call them the homology groups of $\G$ with constant coefficients $A$. 
When $A=\Z$, we simply write $H_n(\G):=H_n(\G,\Z)$. 
In addition, we call elements of $\Ker\partial_n$ cycles and 
elements of $\Im\partial_{n+1}$ boundaries. 
For a cycle $f\in\Ker\partial_n$, 
its equivalence class in $H_n(\G,A)$ is denoted by $[f]$. 
\end{definition}

In \cite{CM00crelle}, 
for any \'etale groupoid $\G$ and any $\G$-sheaf $A$, 
the homology groups $H_n(\G;A)$ are introduced. 
In this article, 
we restrict our attention to the case of constant sheaves. 

Next, we introduce cohomology groups of $\G$. 
For $n\geq0$, 
we define the homomorphisms $\delta^n:C(\G^{(n)},A)\to C(\G^{(n+1)},A)$ by
\[
\delta^n(\xi):=\sum_{i=0}^{n+1}(-1)^i(\xi\circ d^{(n+1)}_i)
\]
for $\xi\in C(G^{(n)},A)$. 
We let $\delta^{-1}:0\to C(\G^{(0)},A)$ be the zero map. 
The abelian groups $C(\G^{(n)},A)$ 
together with the coboundary operators $\delta^n$ form a cochain complex. 

\begin{definition}
[{\cite[Chapter III]{MR660658}, \cite[Definition 1.12]{Re_text}}]
\label{cohomology}
For $n\geq0$, we let $H^n(\G,A)$ be the cohomology groups of 
the cochain complex above, 
i.e.\ $H^n(\G,A):=\Ker\delta^n/\Im\delta^{n-1}$, 
and call them the cohomology groups of $\G$ with constant coefficients $A$. 
When $A=\Z$, we simply write $H^n(\G):=H^n(\G,\Z)$. 
In addition, we call elements of $\Ker\delta^n$ cocycles and 
elements of $\Im\delta^{n-1}$ coboundaries. 
For a cocycle $\xi\in\Ker\delta^n$, 
its equivalence class in $H^n(\G,A)$ is denoted by $[\xi]$. 
\end{definition}

In \cite{MR660658}, 
for any topological groupoid $\G$ and any $\G$-sheaf $A$, 
the cohomology groups $H^n(\G;A)$ are introduced. 
In this article, 
we restrict our attention to the case of 
constant sheaves on \'etale groupoids. 

We will often make use of the following two lemmas. 

\begin{lemma}\label{dd=dd}
Let $n\in\N$. 
For any $0\leq i<j\leq n$, we have 
\[
d_i^{(n-1)}\circ d_j^{(n)}=d_{j-1}^{(n-1)}\circ d_i^{(n)}. 
\]
\end{lemma}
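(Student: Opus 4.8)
The plan is to prove this purely combinatorially, by verifying that both sides produce the same element of $\G^{(n-2)}$ when applied to an arbitrary composable string $(g_1,g_2,\dots,g_n)\in\G^{(n)}$. Since each $d_i^{(n)}$ is a local homeomorphism and the claimed identity is an equality of maps between spaces of composable strings, it suffices to check equality pointwise; there is no topology to worry about beyond noting that both compositions are well-defined local homeomorphisms. So the real content is the standard simplicial identity for face maps, and the argument amounts to a careful bookkeeping of how the ``delete/multiply'' operations interact.

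First I would fix $0\leq i<j\leq n$ and split the verification into cases according to where $i$ and $j$ fall relative to the boundary indices $0$ and $n$. The cleanest approach is to treat the deletion operators ($i=0$ or $i=n$, which drop the first or last entry) and the multiplication operators ($1\leq i\leq n-1$, which merge $g_i g_{i+1}$) uniformly by thinking of $d_i$ as ``act at position $i$.'' The key observation driving every case is that because $i<j$, applying $d_j^{(n)}$ first acts at a strictly higher position than $d_i$, so the operation performed by $d_i$ on the result is unaffected and still acts at position $i$; symmetrically, applying $d_i^{(n)}$ first shifts the entry originally at position $j$ down to position $j-1$, which is exactly why the outer map on the right-hand side is indexed $j-1$. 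The strict inequality $i<j$ guarantees the two operations never collide at the same adjacent pair, so they genuinely commute in this shifted sense.

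The main bookkeeping I would organize as follows: the interior--interior case ($1\leq i$ and $j\leq n-1$, so both are multiplications) is the representative case, where one checks directly that merging the pair at $j$ and then at $i$ yields the same string as merging at $i$ and then at $j-1$, using associativity of the groupoid multiplication only in the boundary subcase $j=i+1$ and needing nothing beyond repositioning otherwise. The remaining cases involve at least one of $i=0$, $j=n$: when $i=0$ (drop the head) the outer operator $d_i^{(n-1)}=d_0^{(n-1)}$ again drops the head of the shorter string, and one checks the two orders agree; the case $j=n$ (drop the tail) is handled symmetrically, with the index shift $j-1=n-1$ reflecting that $d_i$ has not touched the tail. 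I would present the interior case in full and remark that the boundary cases follow by the same positional reasoning.

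The step I expect to be the main obstacle is purely notational rather than mathematical: making the case analysis exhaustive and transparent without drowning in subscripts, particularly handling the subcase $j=i+1$ where the two targeted positions are adjacent and the identity genuinely invokes associativity $(g_i g_{i+1})g_{i+2}=g_i(g_{i+1}g_{i+2})$, versus the generic subcase $j>i+1$ where the merges are disjoint and commute trivially. I would therefore isolate $j=i+1$ explicitly and dispatch $j>i+1$ by the observation that the entries at positions $i,i+1$ and $j,j+1$ form disjoint pairs. Once the positional interpretation is fixed, each case reduces to a one-line comparison of tuples, so I would not grind through all of them but record the pattern and verify the two nontrivial subcases.
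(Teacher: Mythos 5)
Your proposal is correct: the identity is the standard simplicial face-map relation, and your case analysis (disjoint merges commute after the index shift, the adjacent case $j=i+1$ uses associativity of the groupoid multiplication, and the boundary cases $i=0$ or $j=n$ follow the same positional bookkeeping) is exactly the routine verification the paper has in mind. The paper in fact states Lemma \ref{dd=dd} without proof, so your argument supplies precisely the omitted details and takes the only natural approach.
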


Let $X$ be a locally compact Hausdorff space. 
For $f\in C_c(X,\Z)$ and $\xi\in C(X,A)$, 
we let $f\cdot\xi$ denote the continuous function from $X$ to $A$ 
defined by
\[
(f\cdot\xi)(x):=f(x)\cdot\xi(x)
\]
for all $x\in X$. 
Since $f$ has compact support, so does $f \cdot\xi$. 

\begin{lemma}\label{fcdotxi}
Let $\pi:X\to Y$ be a local homeomorphism 
between locally compact Hausdorff spaces $X$ and $Y$. 
For $f\in C_c(X,\Z)$ and $\xi\in C(Y,A)$, one has 
\[
\pi_*(f)\cdot\xi=\pi_*\left(f\cdot(\xi\circ\pi)\right). 
\]
\end{lemma}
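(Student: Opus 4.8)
The plan is to verify the identity pointwise: both sides are functions in $C_c(Y,A)$, so it suffices to show they agree at every $y\in Y$. I would start by unwinding the definition of $\pi_*$ on the right-hand side. By definition,
\[
\pi_*\bigl(f\cdot(\xi\circ\pi)\bigr)(y)
=\sum_{\pi(x)=y}\bigl(f\cdot(\xi\circ\pi)\bigr)(x)
=\sum_{\pi(x)=y}f(x)\cdot\xi(\pi(x)).
\]
The whole point is that the summation ranges over the fiber $\pi^{-1}(y)$, and on that fiber $\pi(x)=y$, so $\xi(\pi(x))=\xi(y)$ is a single element of $A$ that does not depend on which preimage $x$ we are summing over.

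The key step is then to factor this constant value $\xi(y)$ out of the sum. Since $A$ is a topological abelian group, the expression $f(x)\cdot\xi(y)$ is the integer scalar action of $f(x)\in\Z$ on $\xi(y)\in A$, and this action distributes over finite sums of integers: for any finite family of integers $n_x$ and any fixed $a\in A$ one has $\sum_x(n_x\cdot a)=\bigl(\sum_x n_x\bigr)\cdot a$. Applying this with $n_x=f(x)$ and $a=\xi(y)$ gives
\[
\sum_{\pi(x)=y}f(x)\cdot\xi(y)
=\Bigl(\sum_{\pi(x)=y}f(x)\Bigr)\cdot\xi(y)
=\pi_*(f)(y)\cdot\xi(y)
=\bigl(\pi_*(f)\cdot\xi\bigr)(y),
\]
which is exactly the left-hand side evaluated at $y$. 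Since $y$ was arbitrary, the two functions coincide.

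The only point requiring a word of care — and the closest thing to an obstacle — is the finiteness of the fiber sum, which is what legitimizes pulling $\xi(y)$ out via ordinary $\Z$-module distributivity rather than some limiting argument. This is already guaranteed by the standing facts used to define $\pi_*$: since $\pi$ is a local homeomorphism and $f$ has compact support, only finitely many $x\in\pi^{-1}(y)$ satisfy $f(x)\neq0$, so all sums above are genuinely finite and every manipulation is purely algebraic. I expect no further difficulty; the lemma is essentially the projection (push-pull) formula for the $\Z$-module $A$, and the proof is a one-line evaluation once the fiber-wise constancy of $\xi\circ\pi$ is observed.
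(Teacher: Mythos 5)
Your proof is correct: the paper states this lemma without proof, treating it as a routine verification, and your pointwise argument (constancy of $\xi\circ\pi$ on each fiber, $\Z$-linearity of the action on $A$, and finiteness of the fiber sum from compact support plus the local homeomorphism hypothesis) is exactly the intended justification. Nothing is missing.
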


\section{Cup and cap products}

In this section, 
we introduce the cup and cap products for ample groupoids. 
The cup and cap products have already been studied 
by Crainic and Moerdijk \cite{CM9905arXiv,Cr00thesis} 
in the context of sheaves on \'etale groupoids 
(see Remarks 3.3 and 3.6 below). 
Our aim here is to provide a concrete and explicit model 
for these products 
in the setting of constant sheaves on ample groupoids, 
designed to be accessible 
without requiring extensive background in algebraic topology. 

We start by introducing the cup product. 
Let $A$ be a topological abelian group. 

Let $n,m\in\N$.
For $\xi\in C(\G^{(n)},\Z)$ and $\eta\in C(\G^{(m)},A)$, 
we define $\xi\smile\eta\in C(\G^{(n+m)},A)$ by
\[
(\xi\smile\eta)(g_1,g_2,\dots,g_{n+m})
:=\xi(g_1,g_2,\dots,g_n)\cdot\eta(g_{n+1},g_{n+2},\dots,g_{n+m}). 
\]
When $n=0$, we define $\xi\smile\eta\in C(\G^{(m)},A)$ by
\[
(\xi\smile\eta)(g_1,g_2,\dots,g_m)
:=\xi(r(g_1))\cdot\eta(g_1,g_2,\dots,g_m). 
\]
Similarly, when $m=0$, 
we define $\xi\smile\eta\in C(\G^{(n)},A)$ by
\[
(\xi\smile\eta)(g_1,g_2,\dots,g_m)
:=\xi(g_1,g_2,\dots,g_n)\cdot\eta(s(g_n)). 
\]
It is straightforward to verify that the map $\smile$ is bilinear. 
It is also easy to see that $\smile$ is associative, 
i.e. for any $\xi\in C(\G^{(n)},\Z)$, $\eta\in C(\G^{(m)},\Z)$ 
and $\zeta\in C(\G^{(l)},A)$, 
\[
(\xi\smile\eta)\smile\zeta=\xi\smile(\eta\smile\zeta)
\]
holds. 

\begin{proposition}\label{deltaofcup}
In the setting above, one has 
\[
\delta^{n+m}(\xi\smile\eta)
=(\delta^n(\xi)\smile\eta)+(-1)^n(\xi\smile\delta^m(\eta)). 
\]
In particular, 
when $\xi$ and $\eta$ are cocycles, 
$\xi\smile\eta$ is again a cocycle. 
\end{proposition}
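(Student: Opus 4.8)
The plan is to verify the Leibniz-type formula directly by expanding both sides using the definition of the coboundary operator $\delta$ and the cup product, then matching terms. Recall that
\[
\delta^{n+m}(\xi\smile\eta)=\sum_{i=0}^{n+m+1}(-1)^i\bigl((\xi\smile\eta)\circ d_i^{(n+m+1)}\bigr),
\]
so the heart of the matter is to understand how the face maps $d_i^{(n+m+1)}$ interact with the cup product. First I would compute $(\xi\smile\eta)\circ d_i^{(n+m+1)}$ evaluated on a string $(g_1,\dots,g_{n+m+1})$, splitting into three regimes according to where $i$ falls relative to the ``cut'' between the first $n$ and the last $m$ entries: the indices $0\le i\le n-1$ affect only the $\xi$-argument, the indices $n+1\le i\le n+m+1$ affect only the $\eta$-argument, and the middle index $i=n$ multiplies $g_n g_{n+1}$, which straddles the cut.

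Next I would observe the key cancellation mechanism. For $0\le i\le n-1$, applying $d_i$ contracts within the first block, so $(\xi\smile\eta)\circ d_i^{(n+m+1)}=(\xi\circ d_i^{(n)})\smile\eta$, and summing these with signs $(-1)^i$ reproduces all terms of $(\delta^n\xi)\smile\eta$ except the top term $i=n$. Symmetrically, for $n+1\le i\le n+m+1$, the contraction happens entirely within the second block, giving $(\xi\smile\eta)\circ d_i^{(n+m+1)}=\xi\smile(\eta\circ d_{i-n}^{(m)})$; after reindexing $j=i-n$ and accounting for the sign shift $(-1)^i=(-1)^n(-1)^j$, these reproduce all terms of $(-1)^n\bigl(\xi\smile(\delta^m\eta)\bigr)$ except the bottom term $j=0$. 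The remaining term is the straddling one $i=n$, namely $(-1)^n(\xi\smile\eta)\circ d_n^{(n+m+1)}$, in which $g_n$ and $g_{n+1}$ are merged into $g_ng_{n+1}$. I would check that this single term simultaneously supplies the missing $i=n$ term of $(\delta^n\xi)\smile\eta$, where $\xi$ is evaluated on $(g_1,\dots,g_{n-1},g_ng_{n+1})$, and (up to the matching sign $(-1)^n$) the missing $j=0$ term of $\xi\smile(\delta^m\eta)$, where $\eta$ is evaluated on $(g_ng_{n+1},g_{n+2},\dots)$. The point is that on the straddling face the factorization of the cup product places $g_ng_{n+1}$ as the last argument of $\xi$ and as the first argument of $\eta$, so this one term is exactly what both expansions were lacking, and the bookkeeping closes up.

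The main obstacle will be the careful sign and index bookkeeping at the boundary of each regime, together with correctly handling the degenerate cases $n=0$ and $m=0$, where the cup product is defined using the range and source maps rather than by simple concatenation. For $n=0$, the factor $\xi(r(g_1))$ must be tracked through the face maps, and I would use the groupoid relation $r(g_1g_2)=r(g_1)$ together with $s(g_i)=r(g_{i+1})$ to confirm the straddling term behaves correctly; the case $m=0$ is handled symmetrically using $s(g_ng_{n+1})=s(g_{n+1})$. These unit-space evaluations are precisely where Lemma \ref{dd=dd} (the simplicial identities $d_i\circ d_j=d_{j-1}\circ d_i$) becomes useful, ensuring the compositions of face maps with the range and source maps match up as required. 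Once the three regimes are shown to assemble into the claimed right-hand side, the final assertion that $\xi\smile\eta$ is a cocycle whenever $\xi$ and $\eta$ are cocycles follows immediately, since both terms on the right vanish when $\delta^n\xi=0$ and $\delta^m\eta=0$.
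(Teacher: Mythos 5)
Your overall strategy --- expanding $\delta^{n+m}(\xi\smile\eta)=\sum_{i=0}^{n+m+1}(-1)^i(\xi\smile\eta)\circ d_i^{(n+m+1)}$ and sorting indices by their position relative to the cut --- is the same as the paper's, but the central bookkeeping step is wrong and, carried out literally, the terms do not close up. A first symptom: $\delta^n(\xi)=\sum_{i=0}^{n+1}(-1)^i(\xi\circ d_i^{(n+1)})$ has $n+2$ summands (the faces are $d_i^{(n+1)}$, not $d_i^{(n)}$ as you write, and the top index is $n+1$), so your regime $0\le i\le n-1$ leaves \emph{two} terms of $(\delta^n\xi)\smile\eta$ unaccounted for, namely $i=n$ and $i=n+1$, not just $i=n$. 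More seriously, the straddling face gives $d_n^{(n+m+1)}(g_1,\dots,g_{n+m+1})=(g_1,\dots,g_{n-1},g_ng_{n+1},g_{n+2},\dots,g_{n+m+1})$, and the cup product assigns the merged element $g_ng_{n+1}$ to the last slot of $\xi$ \emph{only}; the first argument of $\eta$ is $g_{n+2}$, not $g_ng_{n+1}$. (Your picture of $g_ng_{n+1}$ sitting in both factors is borrowed from the Alexander--Whitney formula for simplices, where the front and back faces overlap in a vertex; in this model the two blocks of a string do not overlap.) Hence the straddling term equals exactly $(-1)^n(\xi\circ d_n^{(n+1)})\smile\eta$, the missing $i=n$ term of $(\delta^n\xi)\smile\eta$, and nothing else; it does not supply the $j=0$ term of $(-1)^n\xi\smile\delta^m(\eta)$, whose value is $(-1)^n\,\xi(g_1,\dots,g_n)\cdot\eta(g_{n+2},\dots,g_{n+m+1})$. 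A single summand could not do double duty in any case: the right-hand side has $n+m+4$ summands against $n+m+2$ on the left, so two right-hand summands must cancel \emph{each other}.

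The missing idea is precisely that cancellation. The $i=n+1$ term of $(\delta^n\xi)\smile\eta$ is $(-1)^{n+1}(\xi\circ d_{n+1}^{(n+1)})\smile\eta$ and the $j=0$ term of $(-1)^n\xi\smile\delta^m(\eta)$ is $(-1)^n\,\xi\smile(\eta\circ d_0^{(m+1)})$; since $d_{n+1}^{(n+1)}$ drops the last entry of the front block and $d_0^{(m+1)}$ drops the first entry of the back block, both evaluate to $\xi(g_1,\dots,g_n)\cdot\eta(g_{n+2},\dots,g_{n+m+1})$, i.e.
\[
(\xi\circ d_{n+1}^{(n+1)})\smile\eta=\xi\smile(\eta\circ d_0^{(m+1)}),
\]
and the opposite signs $(-1)^{n+1}$ and $(-1)^n$ make the pair vanish. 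This is the identity the paper's proof uses to absorb both leftover terms at once. Once you replace your ``one straddling term supplies two missing terms'' step with this pairwise cancellation (and fix the face-map superscripts), the rest of your plan --- the two clean regimes and the concluding observation that a cup of cocycles is a cocycle --- goes through; the degenerate cases $n=0$ and $m=0$ then follow from the conventions built into the definition of $\smile$, without any appeal to Lemma \ref{dd=dd}.
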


\begin{proof}
It follows from the definitions of $\delta^{n+m}$ and $\xi\smile\eta$ 
that 
\begin{align*}
\delta^{n+m}(\xi\smile\eta)
&=\sum_{i=0}^{n+m+1}(-1)^i(\xi\smile\eta)\circ d^{(n+m+1)}_i \\
&=
\sum_{i=0}^n(-1)^i(\xi\smile\eta)\circ d^{(n+m+1)}_i
+\sum_{i=n+1}^{n+m+1}(-1)^i(\xi\smile\eta)\circ d^{(n+m+1)}_i. 
\end{align*}
Thus,
\[
\delta^{n+m}(\xi\smile\eta)
=\sum_{i=0}^n(-1)^i\left((\xi\circ d^{(n+1)}_i)\smile\eta\right)
+\sum_{i=1}^{m+1}(-1)^{n+i}\left(\xi\smile(\eta\circ d^{(m+1)}_i)\right). 
\]
Since 
$(\xi\circ d^{(n+1)}_{n+1})\smile\eta=\xi\smile(\eta\circ d^{(m+1)}_0)$, 
we have 
\[
\delta^{n+m}(\xi\smile\eta)
=\sum_{i=0}^{n+1}(-1)^i\left((\xi\circ d^{(n+1)}_i)\smile\eta\right)
+\sum_{i=0}^{m+1}(-1)^{n+i}\xi\smile\left((\eta\circ d^{(m+1)}_i)\right). 
\]
Therefore, we obtain 
\[
\delta^{n+m}(\xi\smile\eta)
=(\delta^n(\xi)\smile\eta)+(-1)^n(\xi\smile\delta^m(\eta)). 
\]
\end{proof}

\begin{theorem}[cup product]\label{defofcup}
A map $\smile:H^n(\G,\Z)\times H^m(\G,A)\to H^{n+m}(\G,A)$ given by 
\[
([\xi],[\eta])\mapsto [\xi\smile\eta]
\]
is well-defined. 
We write $[\xi]\smile[\eta]:=[\xi\smile\eta]$, 
and call it the cup product of $[\xi]$ and $[\eta]$. 
\end{theorem}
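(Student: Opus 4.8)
The plan is to show that the assignment $([\xi],[\eta])\mapsto[\xi\smile\eta]$ does not depend on the choice of cocycle representatives. By Proposition \ref{deltaofcup}, if $\xi\in\Ker\delta^n$ and $\eta\in\Ker\delta^m$, then $\delta^{n+m}(\xi\smile\eta)=(\delta^n(\xi)\smile\eta)+(-1)^n(\xi\smile\delta^m(\eta))=0$, so $\xi\smile\eta$ is indeed a cocycle and $[\xi\smile\eta]\in H^{n+m}(\G,A)$ makes sense. It therefore remains only to verify well-definedness on cohomology classes.

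First I would fix two cocycles $\xi\in\Ker\delta^n$ and $\eta\in\Ker\delta^m$ and consider perturbing each by a coboundary. Concretely, I would take $\xi'=\xi+\delta^{n-1}(\alpha)$ for some $\alpha\in C(\G^{(n-1)},\Z)$ and $\eta'=\eta+\delta^{m-1}(\beta)$ for some $\beta\in C(\G^{(m-1)},A)$, and show that $[\xi'\smile\eta']=[\xi\smile\eta]$ in $H^{n+m}(\G,A)$. By bilinearity of $\smile$ (already noted in the excerpt), it suffices to handle one perturbation at a time, i.e.\ to show both
\[
(\delta^{n-1}(\alpha))\smile\eta\in\Im\delta^{n+m-1}
\quad\text{and}\quad
\xi\smile(\delta^{m-1}(\beta))\in\Im\delta^{n+m-1}.
\]

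The key computation is to apply the Leibniz-type formula of Proposition \ref{deltaofcup} to the lower-degree cochains. For the first term, I would compute $\delta^{n+m-1}(\alpha\smile\eta)=(\delta^{n-1}(\alpha)\smile\eta)+(-1)^{n-1}(\alpha\smile\delta^m(\eta))$; since $\eta$ is a cocycle the second summand vanishes, giving $\delta^{n-1}(\alpha)\smile\eta=\delta^{n+m-1}(\alpha\smile\eta)\in\Im\delta^{n+m-1}$. Symmetrically, applying the formula to $\xi\smile\beta$ and using that $\xi$ is a cocycle yields $\xi\smile\delta^{m-1}(\beta)=(-1)^n\delta^{n+m-1}(\xi\smile\beta)\in\Im\delta^{n+m-1}$. (Here I would note that Proposition \ref{deltaofcup} requires the first factor to be $\Z$-valued, which is satisfied in both cases since $\alpha$ and $\xi$ take values in $\Z$.) Combining these, $\xi'\smile\eta'-\xi\smile\eta$ is a coboundary, so the cup product descends to cohomology.

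The main obstacle is mostly bookkeeping rather than conceptual: one must be careful that the grading shifts in the sign $(-1)^n$ versus $(-1)^{n-1}$ are tracked correctly, and that the edge cases $n=0$ or $m=0$ (where $\smile$ has a separate definition) are consistent with Proposition \ref{deltaofcup}. I expect the statement of Proposition \ref{deltaofcup} to already cover these boundary cases uniformly, so no separate argument should be needed; the bulk of the work is simply invoking that proposition twice and reading off that both error terms land in the image of $\delta^{n+m-1}$.
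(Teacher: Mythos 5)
Your proposal is correct and follows essentially the same route as the paper: both reduce well-definedness to showing that $\delta^{n-1}(\alpha)\smile\eta$ and $\xi\smile\delta^{m-1}(\beta)$ are coboundaries, and both obtain this by applying the Leibniz formula of Proposition \ref{deltaofcup} to $\alpha\smile\eta$ and $\xi\smile\beta$ and using that the other factor is a cocycle. The only cosmetic difference is that you phrase the reduction via explicit perturbation and bilinearity, while the paper states it as ``$\xi\smile\eta$ is a coboundary if $\xi$ or $\eta$ is''; the computations are identical.
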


\begin{proof}
Suppose that 
$\xi\in C(\G^{(n)},\Z)$ and $\eta\in C(\G^{(m)},A)$ are cocycles. 
We claim that $\xi\smile\eta$ is a coboundary 
if $\xi$ or $\eta$ is a coboundary. 

First, we assume that $\xi$ is a coboundary. 
There exists $\xi'\in C(\G^{(n-1)},\Z)$ 
such that $\delta^{n-1}(\xi')=\xi$. 
Using Proposition \ref{deltaofcup}, we have 
\[
\delta^{n+m-1}(\xi'\smile\eta)
=(\delta^{n-1}(\xi')\smile\eta)+(-1)^{n-1}(\xi'\smile\delta^m(\eta))
=\xi\smile\eta. 
\]
Hence, $\xi\smile\eta$ is a coboundary.
Second, we assume that $\eta$ is a coboundary. 
There exists $\eta'\in C(\G^{(m-1)},A)$ 
such that $\delta^{m-1}(\eta')=\eta$. 
By Proposition \ref{deltaofcup}, we have 
\[
\delta^{n+m-1}(\xi\smile\eta')
=(\delta^n(\xi)\smile\eta')+(-1)^n(\xi\smile\delta^{m-1}(\eta'))
=(-1)^n(\xi\smile\eta). 
\]
Hence, $\xi\smile\eta$ is a coboundary, 
and the proof of the claim is completed. 

When $\xi$ and $\eta$ are cocycles, by Proposition \ref{deltaofcup}, 
$\xi\smile\eta$ is also a cocycle. 
Furthermore, by the claim above, 
the cohomology class of $\xi\smile\eta$ depends only on 
the cohomology classes of $\xi$ and $\eta$. 
This implies that the cup product is well-defined. 
\end{proof}

The cup product for \'etale groupoids has been studied 
in the literature via sheaf-theoretic approaches. 
To address this and provide clarity, 
we include the following remark. 

\begin{remark}\label{remofcup}
For any \'etale groupoid $\G$ and $\G$-sheaf $A$, 
Moerdijk \cite{Mo98Topology} proved that 
Haefliger's cohomology $H^*(\G;A)$ is 
isomorphic to the sheaf cohomology $H^*(B\G;\tilde A)$, 
where $B\G$ is the classifying space of $\G$ and 
$\tilde A$ is the sheaf on $B\G$ induced by $A$. 
The sheaf cohomology has cup products 
(see \cite[Section II.6.6]{MR345092} for instance), 
and the cup product introduced in the theorem above is its particular case. 
In \cite{Mo98Topology}, 
the isomorphism between $H^*(\G;A)$ and $H^*(B\G;\tilde A)$ 
was constructed via the cohomology of the topological category $\simp(\G)$, 
which is the topological sum of all $\G^{(n)}$, $n\geq0$. 
When $A$ is a constant sheaf on an ample groupoid $\G$, 
one can check that the isomorphisms intertwine cup products. 
Another description of cup products is given 
in \cite[Section 2.10]{CM9905arXiv} 
(which is the preprint version of \cite{CM00crelle}). 
\end{remark}

Next, we introduce the cap product for ample groupoids. 
Let $m,n\in\N$ be such that $m\leq n$. 
For $f\in C_c(\G^{(n)},\Z)$ and $\xi\in C(\G^{(m)},A)$, 
we define $f\frown\xi\in C_c(\G^{(n-m)},A)$ by 
\[
f\frown\xi:=\left(d^{(n-m+1)}_{0*}\circ\cdots\circ d^{(n)}_{0*}\right)
\left(f\cdot(\xi\circ d^{(m+1)}_{m+1}\circ\cdots\circ d^{(n)}_n)\right). 
\]
In other words, for every $(h_1,h_2,\dots,h_{n-m})\in\G^{(n-m)}$, 
\[
(f\frown\xi)(h_1,h_2,\dots,h_{n-m})
=\sum f(g_1,g_2,\dots,g_m,h_1,\dots,h_{n-m})\cdot\xi(g_1,g_2,\dots,g_m), 
\]
where the sum is over all $(g_1,g_2,\dots,g_m)\in\G^{(m)}$ 
satisfying $s(g_m)=r(h_1)$. 
When $m=n$, the definition should be understood as 
\[
f\frown\xi:=\left(d^{(1)}_{0*}\circ\cdots\circ d^{(n)}_{0*}\right)
\left(f\cdot\xi\right), 
\]
that is, 
\[
(f\frown\xi)(x)
=\sum f(g_1,g_2,\dots,g_m)\cdot\xi(g_1,g_2,\dots,g_m)
\]
where the sum is over all $(g_1,g_2,\dots,g_m)\in\G^{(m)}$ 
satisfying $s(g_m)=x$. 

\begin{proposition}\label{partialofcap}
When $m<n$, we have 
\[
\partial_{n-m}(f\frown\xi)
=(-1)^m\left((\partial_n(f)\frown\xi)-(f\frown\delta^m(\xi))\right). 
\]
In particular, 
when $f$ is a cycle and $\xi$ is a cocycle, 
$f\frown\xi$ is a cycle. 
\end{proposition}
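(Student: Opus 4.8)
The plan is to reduce the asserted Leibniz-type identity to three elementary face-compatibility relations and then assemble them with the correct signs, so that all ``interior'' contributions cancel in pairs. Throughout I would use the compact description $f\frown\xi=D_*\bigl(f\cdot(\xi\circ E)\bigr)$, where $D=d_0^{(n-m+1)}\circ\cdots\circ d_0^{(n)}$ is the map that drops the first $m$ entries of a string and $E=d_{m+1}^{(m+1)}\circ\cdots\circ d_n^{(n)}$ is the map that retains them, together with the analogous maps on $\G^{(n-1)}$. Since every function involved has compact support and all $d_i$ are local homeomorphisms, the pushforwards and sums below are finite and well defined.

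First I would establish the \emph{back-face} relation: for $1\le i\le n-m$,
\[
d_{i*}^{(n-m)}(f\frown\xi)=(d_{(i+m)*}^{(n)}f)\frown\xi.
\]
Using Lemma \ref{dd=dd} repeatedly to commute $d_i^{(n-m)}$ past the $m$ copies of $d_0$ constituting $D$, the index is raised by $m$, so that $d_{i+m}^{(n)}$ acts first and $D$ is recovered on $\G^{(n-1)}$. Since $i+m\ge m+1$, this face fixes the first $m$ entries, whence $\xi\circ E$ factors through it; Lemma \ref{fcdotxi} then lets me pull the pushforward out of the product and recognize the result as the cap product of $d_{(i+m)*}^{(n)}f$ with $\xi$.

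Next I would verify, by a direct computation with the pointwise cap-product formula, the \emph{front-face} relations, namely that for $0\le j\le m$
\[
(d_{j*}^{(n)}f)\frown\xi=f\frown(\xi\circ d_j^{(m+1)}),
\]
together with the \emph{junction} relation $d_{0*}^{(n-m)}(f\frown\xi)=f\frown(\xi\circ d_{m+1}^{(m+1)})$. For the former, splitting or deleting an entry among the first $m+1$ slots of the string $(g_1,\dots,g_m,h_1,\dots)$ exactly reproduces the effect of $d_j^{(m+1)}$ inside $\xi$ after relabelling the summation variables. For the latter, the extra summation produced by $d_{0*}^{(n-m)}$ (over the element it deletes) merges with the $m$ cap-product variables into a single composable string of length $m+1$, on which $\xi\circ d_{m+1}^{(m+1)}$ depends only through its first $m$ entries.

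Finally I would assemble the pieces. Writing $\partial_n f\frown\xi=\sum_{j=0}^{n}(-1)^j(d_{j*}^{(n)}f\frown\xi)$ and $f\frown\delta^m\xi=\sum_{k=0}^{m+1}(-1)^k f\frown(\xi\circ d_k^{(m+1)})$, the front-face relations identify the $k=j$ terms for $0\le j\le m$ in the two sums, so they cancel in the difference $(\partial_n f\frown\xi)-(f\frown\delta^m\xi)$. The back-face relation rewrites the terms $j=m+1,\dots,n$ as $(-1)^m\sum_{i=1}^{n-m}(-1)^i d_{i*}^{(n-m)}(f\frown\xi)$ after setting $j=i+m$, and the junction relation supplies the missing $i=0$ boundary term from the leftover $k=m+1$ coface term. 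Collecting the common factor then gives $(-1)^m\bigl((\partial_n f\frown\xi)-(f\frown\delta^m\xi)\bigr)=\partial_{n-m}(f\frown\xi)$, and the ``in particular'' clause is immediate, since $\partial_n f=0$ and $\delta^m\xi=0$ force the right-hand side to vanish. I expect the main obstacle to be the sign and index bookkeeping, concentrated in the junction relation and the $j=m$ case of the front-face relations: these are precisely the terms where the face map operates at the seam between the cochain block $(g_1,\dots,g_m)$ and the chain block $(h_1,\dots)$, and where one must recognize the pushforward over the deleted element as enlarging the cochain block by one slot.
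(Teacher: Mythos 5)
Your proposal is correct and follows essentially the same route as the paper's proof: the same splitting of $\partial_{n-m}(f\frown\xi)$ into back faces matched with the $j\geq m+1$ terms of $\partial_n(f)\frown\xi$ via Lemma \ref{dd=dd} and Lemma \ref{fcdotxi}, front faces cancelling against the $k\leq m$ cofaces of $\delta^m(\xi)$, and the junction term supplying the $k=m+1$ coface, with the same sign bookkeeping. The only difference is organizational (you isolate the three face relations as separate claims, while the paper runs one chain of displayed equalities), so no further comment is needed.
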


\begin{proof}
By Lemma \ref{dd=dd}, 
\begin{align*}
&\partial_{n-m}(f\frown\xi)\\
&=
\sum_{i=0}^{n-m}(-1)^i
\left(d^{(n-m)}_{i*}\circ d^{(n-m+1)}_{0*}\circ\cdots\circ d^{(n)}_{0*}\right)
\left(f\cdot(\xi\circ d^{(m+1)}_{m+1}\circ\cdots\circ d^{(n)}_n)\right)\\
&=
\sum_{i=0}^{n-m}(-1)^i
\left(d^{(n-m)}_{0*}\circ d^{(n-m+1)}_{0*}\circ
\cdots\circ d^{(n)}_{i+m*}\right)
\left(f\cdot(\xi\circ d^{(m+1)}_{m+1}\circ\cdots\circ d^{(n)}_n)\right)\\
&=
(-1)^m\sum_{i=m}^{n}(-1)^i
\left(d^{(n-m)}_{0*}\circ d^{(n-m+1)}_{0*}\circ\cdots\circ d^{(n)}_{i*}\right)
\left(f\cdot(\xi\circ d^{(m+1)}_{m+1}\circ\cdots\circ d^{(n)}_n)\right). 
\end{align*}
When $m+1\leq i\leq n$, 
\[
d^{(m+1)}_{m+1}\circ\cdots\circ d^{(n)}_n
=d^{(m+1)}_{m+1}\circ\cdots\circ d^{(n-1)}_{n-1}\circ d^{(n)}_i. 
\]
This, together with Lemma \ref{fcdotxi}, implies 
\begin{align*}
&\partial_{n-m}(f\frown\xi)\\
&=
\left(d^{(n-m)}_{0*}\circ\cdots\circ d^{(n)}_{m*}\right)
\left(f\cdot(\xi\circ d^{(m+1)}_{m+1}\circ\cdots\circ d^{(n)}_n)\right)\\
&\quad+(-1)^m\sum_{i=m+1}^{n}(-1)^i
\left(d^{(n-m)}_{0*}\circ\cdots\circ d^{(n)}_{i*}\right)
\left(f\cdot(\xi\circ d^{(m+1)}_{m+1}\circ\cdots\circ d^{(n)}_i)\right)\\
&=
\left(d^{(n-m)}_{0*}\circ\cdots\circ d^{(n)}_{m*}\right)
\left(f\cdot(\xi\circ d^{(m+1)}_{m+1}\circ\cdots\circ d^{(n)}_n)\right)\\
&\quad+(-1)^m\sum_{i=m+1}^{n}(-1)^i
\left(d^{(n-m)}_{0*}\circ\cdots\circ d^{(n-1)}_{0*}\right)
\left(d^{(n)}_{i*}(f)\cdot
(\xi\circ d^{(m+1)}_{m+1}\circ\cdots\circ d^{(n-1)}_{n-1})\right). 
\end{align*}
Since 
\[
\partial_n(f)\frown\xi
=\sum_{i=0}^n(-1)^i
\left(d^{(n-m)}_{0*}\circ\cdots\circ d^{(n-1)}_{0*}\right)
\left(d^{(n)}_{i*}(f)\cdot
(\xi\circ d^{(m+1)}_{m+1}\circ\cdots\circ d^{(n-1)}_{n-1})\right), 
\]
we get 
\begin{align*}
&\partial_{n-m}(f\frown\xi)\\
&=
\left(d^{(n-m)}_{0*}\circ\cdots\circ d^{(n)}_{m*}\right)
\left(f\cdot(\xi\circ d^{(m+1)}_{m+1}\circ\cdots\circ d^{(n)}_n)\right)
+(-1)^m(\partial_n(f)\frown\xi)\\
&\quad-(-1)^m\sum_{i=0}^{m}(-1)^i
\left(d^{(n-m)}_{0*}\circ\cdots\circ d^{(n-1)}_{0*}\right)
\left(d^{(n)}_{i*}(f)\cdot
(\xi\circ d^{(m+1)}_{m+1}\circ\cdots\circ d^{(n-1)}_{n-1})\right)\\
&=
\left(d^{(n-m)}_{0*}\circ\cdots\circ d^{(n)}_{m*}\right)
\left(f\cdot(\xi\circ d^{(m+1)}_{m+1}\circ\cdots\circ d^{(n)}_n)\right)
+(-1)^m(\partial_n(f)\frown\xi)\\
&\quad-(-1)^m\sum_{i=0}^{m}(-1)^i
\left(d^{(n-m)}_{0*}\circ\cdots\circ d^{(n)}_{i*}\right)
\left(f\cdot(\xi\circ d^{(m+1)}_{m+1}\circ\cdots\circ d^{(n)}_i)\right), 
\end{align*}
where we have used Lemma \ref{fcdotxi} in the second equality. 
When $0\leq i\leq m$, 
\[
d^{(n-m)}_0\circ d^{(n-m+1)}_0\circ\cdots\circ d^{(n)}_i
=d^{(n-m)}_0\circ d^{(n-m+1)}_0\circ\cdots\circ d^{(n)}_0. 
\]
This, together with Lemma \ref{dd=dd}, implies 
\begin{align*}
&\partial_{n-m}(f\frown\xi)\\
&=(-1)^m(\partial_n(f)\frown\xi)+
\left(d^{(n-m)}_{0*}\circ\cdots\circ d^{(n)}_{0*}\right)
\left(f\cdot(\xi\circ d^{(m+1)}_{m+1}\circ\cdots\circ d^{(n)}_n)\right)\\
&\quad-(-1)^m\sum_{i=0}^{m}(-1)^i
\left(d^{(n-m)}_{0*}\circ\cdots\circ d^{(n)}_{0*}\right)
\left(f\cdot
(\xi\circ d^{(m+1)}_i\circ d^{(m+2)}_{m+2}\circ\cdots\circ d^{(n)}_n)\right)\\
&=(-1)^m(\partial_n(f)\frown\xi)\\
&\quad-(-1)^m\sum_{i=0}^{m+1}(-1)^i
\left(d^{(n-m)}_{0*}\circ\cdots\circ d^{(n)}_{0*}\right)
\left(f\cdot
(\xi\circ d^{(m+1)}_i\circ d^{(m+2)}_{m+2}\circ\cdots\circ d^{(n)}_n)\right)\\
&=(-1)^m\left(\partial_n(f)\frown\xi)-f\frown\delta^m(\xi)\right), 
\end{align*}
which completes the proof. 
\end{proof}

\begin{theorem}[cap product]\label{defofcap}
Let $m,n\in\N\cup\{0\}$ be such that $m\leq n$.
A map $\frown:H_n(\G,\Z)\times H^m(\G,A)\to H_{n-m}(\G,A)$ given by 
\[
([f],[\xi])\mapsto [f\frown\xi]
\]
is well-defined. 
We write $[f]\frown[\xi]:=[f\frown\xi]$, 
and call it the cap product of $[f]$ and $[\xi]$. 
\end{theorem}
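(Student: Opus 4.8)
The plan is to establish well-definedness in exactly the same way that Theorem \ref{defofcup} follows from Proposition \ref{deltaofcup}, now using Proposition \ref{partialofcap} as the algebraic engine. There are two things to verify. First, I would check that if $f\in\Ker\partial_n$ and $\xi\in\Ker\delta^m$, then $f\frown\xi$ is a cycle, so that $[f\frown\xi]\in H_{n-m}(\G,A)$ is meaningful. For $m<n$ this is precisely the final assertion of Proposition \ref{partialofcap}: since $\partial_n(f)=0$ and $\delta^m(\xi)=0$, the formula there forces $\partial_{n-m}(f\frown\xi)=0$. For $m=n$ there is nothing to prove, because $f\frown\xi\in C_c(\G^{(0)},A)=\Ker\partial_0$ automatically. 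Second, using bilinearity of $\frown$, showing that $[f\frown\xi]$ depends only on $[f]$ and $[\xi]$ reduces to the single claim that $f\frown\xi$ is a boundary whenever $f$ is a boundary (with $\xi$ a cocycle) or $\xi$ is a coboundary (with $f$ a cycle).

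For the first subcase I would write $f=\partial_{n+1}(f')$ with $f'\in C_c(\G^{(n+1)},\Z)$ and apply Proposition \ref{partialofcap} with $n$ replaced by $n+1$; the hypothesis $m<n+1$ holds automatically since $m\leq n$. Using $\delta^m(\xi)=0$ and $\partial_{n+1}(f')=f$, the formula collapses to
\[
\partial_{n-m+1}(f'\frown\xi)=(-1)^m\,(f\frown\xi),
\]
so $f\frown\xi$ is a boundary. For the second subcase I may assume $m\geq1$, since a coboundary in degree $0$ is zero (as $\delta^{-1}=0$) and then $f\frown\xi=0$ trivially. Writing $\xi=\delta^{m-1}(\xi')$ with $\xi'\in C(\G^{(m-1)},A)$ and applying Proposition \ref{partialofcap} with $m$ replaced by $m-1$ — here $m-1<n$ holds because $m\leq n$ — and using $\partial_n(f)=0$ and $\delta^{m-1}(\xi')=\xi$, one gets
\[
\partial_{n-m+1}(f\frown\xi')=-(-1)^{m-1}(f\frown\xi)=(-1)^m\,(f\frown\xi),
\]
so again $f\frown\xi$ is a boundary. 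Combining the two subcases shows that replacing $f$ by $f+\partial g$ or $\xi$ by $\xi+\delta h$ alters $f\frown\xi$ only by a boundary, which is exactly well-definedness.

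I do not expect a genuine obstacle here; the argument is a formal consequence of Proposition \ref{partialofcap}, fully parallel to the cup-product case. The only points needing care are the index bookkeeping in the two applications (shifting $n\mapsto n+1$ in one subcase and $m\mapsto m-1$ in the other, verifying each time the strict inequality demanded by Proposition \ref{partialofcap}) and the two degenerate cases $m=n$ and $m=0$, which must be handled separately precisely because that proposition is stated only under $m<n$.
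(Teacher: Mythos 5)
Your proposal is correct and follows essentially the same route as the paper: verify the cycle property via Proposition \ref{partialofcap} (with $m=n$ handled trivially), then reduce well-definedness to the two boundary/coboundary subcases, applying the proposition with $n\mapsto n+1$ and $m\mapsto m-1$ respectively. Your extra care with the degenerate case $m=0$ and with the sign in the second subcase (where your $(-1)^m$ is actually the one consistent with the stated formula of Proposition \ref{partialofcap}; the paper's displayed sign there is immaterial since only boundaryhood is needed) only makes the argument tidier.
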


\begin{proof}
Suppose that 
$f\in C_c(\G^{(n)},\Z)$ is a cycle and 
$\xi\in C(\G^{(m)},A)$ is a cocycle. 
When $m=n$, $f\frown\xi\in C_c(\G^{(0)},A)$ is clearly a cycle. 
When $m<n$, by Proposition \ref{partialofcap}, 
$f\frown\xi\in C_c(\G^{(n-m)},A)$ is a cycle. 
We claim that $f\frown\xi$ is a boundary 
if $f$ is a boundary or $\xi$ is a coboundary. 

First, we assume that $f$ is a boundary. 
There exists $f'\in C_c(\G^{(n+1)},\Z)$ 
such that $\partial_{n+1}(f')=f$. 
By Proposition \ref{partialofcap}, we have 
\[
\partial_{n-m+1}(f'\frown\xi)
=(-1)^m\left((\partial_{n+1}(f')\frown\xi)
+(f'\frown\delta^m(\xi))\right)
=(-1)^m(f\frown\xi). 
\]
Hence, $f\frown\xi$ is a boundary.
Second, we assume that $\xi$ is a coboundary. 
There exists $\xi'\in C(\G^{(m-1)},A)$ 
such that $\delta^{m-1}(\xi')=\xi$. 
By Proposition \ref{partialofcap}, we have 
\[
\partial_{n-m+1}(f\frown\xi')
=(-1)^{m-1}\left((\partial_n(f)\frown\xi')
+(f\frown\delta^{m-1}(\xi'))\right)
=(-1)^{m-1}(f\frown\xi). 
\]
Hence, $f\frown\xi$ is a boundary, 
and the proof of the claim is completed. 

It follows that 
the homology class of $f\frown\xi$ depends only on 
the homology and cohomology classes of $f$ and $\xi$. 
This implies that the cap product is well-defined. 
\end{proof}

As with the cup product, 
the cap product has a well-established abstract formulation. 
We therefore include the following remark 
to connect our concrete construction with previous works. 

\begin{remark}\label{remofcap}
A model of cap product has been constructed 
in Crainic's PhD thesis (\cite[Section 2.2.12]{Cr00thesis}) 
in the following way. 
Let $\G$ be an \'etale groupoid. 
For any $\G$-sheaves $A,B$, 
one can construct $\Ext_\G^*(A,B)$ 
as the right derived functor of $B\mapsto\Hom_\G(A,B)$. 
The particular case is $\Ext_\G^*(\Z,B)=H^*(\G;B)$. 
As observed in \cite[Section 1.2.6]{Cr00thesis}, 
the Ext groups act on the the homology: 
\[
H_n(\G;A)\times\Ext_\G^m(A,B)\to H_{n-m}(\G;B). 
\]
When working over $\Z$, 
this yields the cap product for \'etale groupoids: 
\[
\frown\ :H_n(\G;\Z)\times H^m(\G;A)\to H_{n-m}(\G;A). 
\]
Theorem \ref{defofcap} gives a concrete description of the cap product 
in the case of constant sheaves on ample groupoids. 
\end{remark}

The following relationship holds between the cup and cap products. 
We remark that 
this also follows from the Crainic's presentation of cap products 
mentioned in the remark above. 
See \cite[Section 1.2.6, 2.2.12]{Cr00thesis}. 

\begin{proposition}
Let $n,m,l\in\N$ be such that $m+l\leq n$. 
For any $f\in C_c(G^{(n)},\Z)$, $\xi\in C(G^{(m)},\Z)$ and 
$\eta\in C(G^{(l)},A)$, we have 
\[
f\frown(\xi\smile\eta)=(f\frown\xi)\frown\eta. 
\]
\end{proposition}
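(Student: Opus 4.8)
The identity is a chain-level statement about the specific cochains $\xi,\eta$ and the specific chain $f$ (not about their classes), so the plan is to verify it by a direct computation, evaluating both sides at an arbitrary point of $\G^{(n-m-l)}$ and matching the resulting sums term by term. Both $f\frown(\xi\smile\eta)$ and $(f\frown\xi)\frown\eta$ lie in $C_c(\G^{(n-m-l)},A)$, and I would work throughout with the explicit pointwise formulas for the cap product recorded before Proposition \ref{partialofcap} and for the cup product from Theorem \ref{defofcup}, rather than with the pushforward expressions, since the former make the bookkeeping transparent.

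First I would expand the left-hand side. Fix $(h_1,\dots,h_{n-m-l})\in\G^{(n-m-l)}$. By the pointwise description of the cap product, $(f\frown(\xi\smile\eta))(h_1,\dots,h_{n-m-l})$ is the sum over all $(g_1,\dots,g_{m+l})\in\G^{(m+l)}$ with $s(g_{m+l})=r(h_1)$ of
\[
f(g_1,\dots,g_{m+l},h_1,\dots,h_{n-m-l})\cdot(\xi\smile\eta)(g_1,\dots,g_{m+l}).
\]
Substituting $(\xi\smile\eta)(g_1,\dots,g_{m+l})=\xi(g_1,\dots,g_m)\cdot\eta(g_{m+1},\dots,g_{m+l})$ turns this into a single sum whose summand is $f(g_1,\dots,g_{m+l},h_1,\dots,h_{n-m-l})\,\xi(g_1,\dots,g_m)\,\eta(g_{m+1},\dots,g_{m+l})$.

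Next I would expand the right-hand side in two stages. Evaluating the outer cap product at $(h_1,\dots,h_{n-m-l})$ gives a sum over $(g'_1,\dots,g'_l)\in\G^{(l)}$ with $s(g'_l)=r(h_1)$ of $(f\frown\xi)(g'_1,\dots,g'_l,h_1,\dots,h_{n-m-l})\cdot\eta(g'_1,\dots,g'_l)$; note $f\frown\xi\in C_c(\G^{(n-m)},\Z)$, so the displayed argument correctly lies in $\G^{(n-m)}$. Expanding the inner cap product then introduces a further sum over $(g_1,\dots,g_m)\in\G^{(m)}$ with $s(g_m)=r(g'_1)$, producing the double sum with summand $f(g_1,\dots,g_m,g'_1,\dots,g'_l,h_1,\dots,h_{n-m-l})\,\xi(g_1,\dots,g_m)\,\eta(g'_1,\dots,g'_l)$.

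Finally I would identify the two expressions by relabelling $g_{m+i}:=g'_i$ for $1\le i\le l$. The sole (mild) obstacle is checking that the two ranges of summation coincide under this relabelling: the right-hand side imposes that $(g'_1,\dots,g'_l)$ be composable with $s(g'_l)=r(h_1)$ and that $(g_1,\dots,g_m)$ be composable with $s(g_m)=r(g'_1)$, which is precisely the condition that $(g_1,\dots,g_m,g'_1,\dots,g'_l)\in\G^{(m+l)}$ with $s(g_{m+l})=r(h_1)$ appearing on the left. Once this bijection of index sets is in place the summands agree verbatim, giving the claim; finiteness of all the sums is automatic since $f$ has compact support, exactly as in the already-established well-definedness of the cap product. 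I would close by noting that the degenerate cases $m=0$, $l=0$, or $m+l=n$ follow from the same unravelling using the boundary conventions fixed in the definitions of $\smile$ and $\frown$.
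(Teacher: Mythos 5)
Your proposal is correct and follows essentially the same route as the paper's proof: both sides are evaluated pointwise via the explicit summation formula for the cap product, the cup product is expanded, and the two (iterated versus single) sums are identified by matching the composability conditions on the index sets. No meaningful difference from the paper's argument.
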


\begin{proof}
For $(h_1,h_2,\dots,h_{n-m-l})\in\G^{(n-m-l)}$, 
we have 
\begin{align*}
&\left(f\frown(\xi\smile\eta)\right)(h_1,h_2,\dots,h_{n-m-l})\\
&=\sum f(g_1,\dots,g_{m+l},h_1,\dots,h_{n-m-l})
\cdot(\xi\smile\eta)(g_1,\dots,g_{m+l})\\
&=\sum f(g_1,\dots,g_{m+l},h_1,\dots,h_{n-m-l})
\cdot\xi(g_1,\dots,g_m)\cdot\eta(g_{m+1},\dots,g_{m+l}), 
\end{align*}
where the sum is over all $(g_1,\dots,g_{m+l})\in\G^{(m+l)}$ 
satisfying $s(g_{m+l})=r(h_1)$. 
On the other hand, we have 
\begin{align*}
&\left((f\frown\xi)\frown\eta\right)(h_1,h_2,\dots,h_{n-m-l})\\
&=\sum(f\frown\xi)(p_1,\dots,p_l,h_1,\dots,h_{n-m-l})
\cdot\eta(p_1,\dots,p_l)\\
&=\sum\sum f(q_1,\dots,q_m,p_1,\dots,p_l,h_1,\dots,h_{n-m-l})
\cdot\xi(q_1,\dots,q_m)\cdot\eta(p_1,\dots,p_l), 
\end{align*}
where the first sum is over $(p_1,\dots,p_l)\in\G^{(l)}$ 
satisfying $s(p_l)=r(h_1)$ and 
the second sum is over $(q_1,\dots,q_m)\in\G^{(m)}$ 
satisfying $s(q_m)=r(p_1)$. 
Hence we get the conclusion. 
\end{proof}

In the rest of this section, 
we discuss functoriality of the products. 

Let $\pi:\G\to\H$ be a homomorphism between ample groupoids. 
We let $\pi^{(0)}:\G^{(0)}\to\H^{(0)}$ denote 
the restriction of $\pi$ to $\G^{(0)}$, 
and $\pi^{(n)}:\G^{(n)}\to\H^{(n)}$ denote 
the restriction of the $n$-fold product $\pi\times\pi\times\dots\times\pi$ 
to $\G^{(n)}$. 
It is plain that 
$\pi^{(n)}$ commute with the maps $d^{(n)}_i$. 
Hence we obtain homomorphisms 
\[
H^n(\pi):H^n(\H,A)\to H^n(\G,A). 
\]
Moreover, 
when the homomorphism $\pi$ is a local homeomorphism 
(i.e. an \'etale map), 
we can consider homomorphisms 
$\pi^{(n)}_*:C_c(\G^{(n)},A)\to C_c(\H^{(n)},A)$. 
Since they commute with the boundary operators $\partial_n$, 
we obtain homomorphisms 
\[
H_n(\pi):H_n(\G,A)\to H_n(\H,A). 
\]
It is straightforward to verify the following. 

\begin{proposition}
Let $\pi:\G\to\H$ be a homomorphism between ample groupoids. 
For any $[\xi]\in H^n(\H,\Z)$ and $[\eta]\in H^n(\H,A)$, 
one has 
\[
H^n(\pi)([\xi])\smile H^m(\pi)([\eta])
=H^{n+m}(\pi)([\xi]\smile[\eta]). 
\]
\end{proposition}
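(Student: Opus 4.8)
The plan is to reduce the statement to a cochain-level identity and then verify that identity by direct evaluation on composable strings. First I would recall that, by construction, the map $H^n(\pi)$ is induced by pullback of cochains: for a cocycle $\xi\in C(\H^{(n)},\Z)$ one has $H^n(\pi)([\xi])=[\xi\circ\pi^{(n)}]$, and this assignment is well-defined precisely because $\pi^{(n)}$ commutes with the face maps $d^{(n)}_i$, so that pullback commutes with $\delta^n$. The same description applies to the second factor: reading it as a class $[\eta]\in H^m(\H,A)$ (consistent with the map $H^m(\pi)$ appearing in the conclusion), we have $H^m(\pi)([\eta])=[\eta\circ\pi^{(m)}]$.

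By the definition of the cup product on cohomology (Theorem \ref{defofcup}), the left-hand side of the claimed equality is $[(\xi\circ\pi^{(n)})\smile(\eta\circ\pi^{(m)})]$, while the right-hand side is $[(\xi\smile\eta)\circ\pi^{(n+m)}]$. Hence it suffices to establish the cochain-level equality
\[
(\xi\circ\pi^{(n)})\smile(\eta\circ\pi^{(m)})=(\xi\smile\eta)\circ\pi^{(n+m)},
\]
as an identity of functions on $\G^{(n+m)}$. I would prove this by evaluating both sides at an arbitrary string $(g_1,\dots,g_{n+m})\in\G^{(n+m)}$.

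In the generic case $n,m\geq1$, both sides evaluate to $\xi(\pi(g_1),\dots,\pi(g_n))\cdot\eta(\pi(g_{n+1}),\dots,\pi(g_{n+m}))$, using only that $\pi^{(k)}$ is the restriction of the $k$-fold product of $\pi$ and that the defining formula for $\smile$ splits a string into its first $n$ and last $m$ entries. The one place that needs a little care---and the closest thing to an obstacle---is the boundary cases $n=0$ or $m=0$, where the definition of $\smile$ inserts a value of $\xi$ at $r(g_1)$ or of $\eta$ at $s(g_n)$. There I would use that $\pi$ is a groupoid homomorphism, so it intertwines the range and source maps (equivalently, $\pi^{(1)}$ commutes with $d^{(1)}_1=r$ and $d^{(1)}_0=s$, as already noted before the statement); thus $\xi(r(\pi(g_1)))=\xi(\pi(r(g_1)))$ and likewise at the source, so the two sides agree once more. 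Finally, bilinearity of $\smile$ together with the well-definedness of the cup product upgrades this pointwise identity to the asserted equality of cohomology classes.
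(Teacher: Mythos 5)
Your proposal is correct and is precisely the ``straightforward verification'' the paper has in mind: the paper states this proposition without proof, and the intended argument is exactly your cochain-level identity $(\xi\circ\pi^{(n)})\smile(\eta\circ\pi^{(m)})=(\xi\smile\eta)\circ\pi^{(n+m)}$, checked by evaluation on strings, with the $n=0$ or $m=0$ cases handled via $\pi\circ r=r\circ\pi$ and $\pi\circ s=s\circ\pi$.
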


As for the cap product, we obtain the following. 

\begin{proposition}
Let $\pi:\G\to\H$ be a homomorphism between ample groupoids. 
Suppose that $\pi$ is a local homeomorphism. 
Let $m,n\in\N$ be such that $m\leq n$. 
Then, for any $[f]\in H_n(\G,\Z)$ and $[\xi]\in H^m(\H,A)$, 
one has 
\[
H_n(\pi)([f])\frown[\xi]
=H_{n-m}(\pi)\left([f]\frown H^m(\pi)([\xi])\right). 
\]
\end{proposition}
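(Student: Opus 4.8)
The plan is to reduce the statement to a chain-level identity and then verify it by an operator computation in the same style as the proof of Proposition \ref{partialofcap}. Since $H_n(\pi)$ is induced by the pushforward $\pi^{(n)}_*$ and $H^m(\pi)$ by the pullback $\xi\mapsto\xi\circ\pi^{(m)}$, and since the cap product descends to (co)homology by Theorem \ref{defofcap}, it suffices to prove that
\[
\pi^{(n)}_*(f)\frown\xi=\pi^{(n-m)}_*\bigl(f\frown(\xi\circ\pi^{(m)})\bigr)
\]
holds for every $f\in C_c(\G^{(n)},\Z)$ and every cocycle $\xi\in C(\H^{(m)},A)$ (in fact for every $\xi$). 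Evaluating at $[f]$ and $[\xi]$ and passing to classes then gives the claimed equality.

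First I would record the two pushforward identities that make the computation go through. Because $\pi$ is a groupoid homomorphism, each $\pi^{(n)}$ intertwines the face maps, that is, $d^{(n)}_i\circ\pi^{(n)}=\pi^{(n-1)}\circ d^{(n)}_i$, where the left-hand $d^{(n)}_i$ is the face map of $\H$ and the right-hand one is that of $\G$. Since $\pi$, and hence every $\pi^{(n)}$, is a local homeomorphism, I can apply the pushforward and use $(\alpha\circ\beta)_*=\alpha_*\circ\beta_*$ to obtain the commutation relation
\[
d^{(n)}_{i*}\circ\pi^{(n)}_*=\pi^{(n-1)}_*\circ d^{(n)}_{i*}.
\]
Iterating the intertwining relation for $i=m+1,\dots,n$ also yields
\[
\xi\circ d^{(m+1)}_{m+1}\circ\cdots\circ d^{(n)}_n\circ\pi^{(n)}
=(\xi\circ\pi^{(m)})\circ d^{(m+1)}_{m+1}\circ\cdots\circ d^{(n)}_n,
\]
with the face maps on the left taken in $\H$ and on the right in $\G$.

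With these in hand, I would expand the left-hand side using the definition of the cap product on $\H$, apply Lemma \ref{fcdotxi} to the local homeomorphism $\pi^{(n)}$ to rewrite $\pi^{(n)}_*(f)\cdot(\,\cdot\,)$ as $\pi^{(n)}_*\bigl(f\cdot((\,\cdot\,)\circ\pi^{(n)})\bigr)$, substitute the second displayed relation for the pulled-back cochain factor, and finally slide $\pi^{(n)}_*$ to the outside through the composite $d^{(n-m+1)}_{0*}\circ\cdots\circ d^{(n)}_{0*}$ by repeated use of the commutation relation. What remains inside is precisely $f\frown(\xi\circ\pi^{(m)})$ computed in $\G$, so the result equals $\pi^{(n-m)}_*\bigl(f\frown(\xi\circ\pi^{(m)})\bigr)$. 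The degenerate case $m=n$ is handled identically, using the convention for $f\frown\xi$ recorded before Proposition \ref{partialofcap}.

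I do not expect a serious obstacle; the mathematical content is the single application of the projection formula (Lemma \ref{fcdotxi}) together with the commutation of $\pi_*$ with the face pushforwards. The only thing requiring care will be bookkeeping: keeping track of which groupoid each face map $d^{(n)}_i$ belongs to, and checking that Lemma \ref{fcdotxi} is invoked with the correct local homeomorphism $\pi^{(n)}$. Finiteness of all the sums involved is automatic, since $f$ has compact support and $\pi^{(n)}$ is a local homeomorphism, so every pushforward appearing is well defined and functorial as established in Section 2.
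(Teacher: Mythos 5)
Your proposal is correct, but it proves the key chain-level identity
\[
\pi^{(n)}_*(f)\frown\xi=\pi^{(n-m)}_*\bigl(f\frown(\xi\circ\pi^{(m)})\bigr)
\]
by a different mechanism than the paper. The paper's proof is a bare-hands pointwise computation: it evaluates both sides at a string $(h_1,\dots,h_{n-m})\in\H^{(n-m)}$, unwinds each as a double sum over preimages and over strings $(g_1,\dots,g_m)$ with $s(g_m)=r(h_1)$, and observes that the two sums range over the same index set $\{(g'_1,\dots,g'_m,h'_1,\dots,h'_{n-m})\in\G^{(n)}:\pi^{(n-m)}(h'_1,\dots,h'_{n-m})=(h_1,\dots,h_{n-m})\}$ with identical summands; Lemma \ref{fcdotxi} is never invoked. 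You instead work at the operator level, in the style of the proof of Proposition \ref{partialofcap}: you combine the projection formula (Lemma \ref{fcdotxi}) applied to $\pi^{(n)}$, the intertwining $d^{(k)}_i\circ\pi^{(k)}=\pi^{(k-1)}\circ d^{(k)}_i$ together with functoriality of the pushforward to get $d^{(k)}_{i*}\circ\pi^{(k)}_*=\pi^{(k-1)}_*\circ d^{(k)}_{i*}$, and then slide $\pi^{(n)}_*$ out through the composite $d^{(n-m+1)}_{0*}\circ\cdots\circ d^{(n)}_{0*}$. Both arguments are valid and both rely on the fact, recorded in the paper just before the proposition, that $\pi^{(k)}$ is a local homeomorphism commuting with the face maps. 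Your version is more structural and reuses the machinery already set up for Proposition \ref{partialofcap}, which makes the bookkeeping about which groupoid each face map lives in the only delicate point (as you note); the paper's version is more elementary and makes the re-indexing of the sums completely explicit. Your reduction to the chain level and the remark that the identity holds for arbitrary $\xi$ (not just cocycles) are both fine.
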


The following diagram illustrates the equality above. 
\[
\xymatrix@M=8pt{
H_n(\G,\Z)\times H^m(\G,A) \ar[r]^-\frown 
\ar@<-6ex>[d]_-{H_n(\pi)} & 
H_{n-m}(\G,A) \ar[d]^-{H_{n-m}(\pi)} \\
H_n(\H,\Z)\times H^m(\H,A) \ar[r]^-\frown 
\ar@<-6ex>[u]_-{H^m(\pi)} & 
H_{n-m}(\H,A) \\
}
\]

\begin{proof}
By definition, for every $(h_1,h_2,\dots,h_{n-m})\in\H^{(n-m)}$, 
\begin{align*}
& \left(\pi^{(n)}_*(f)\frown\xi\right)(h_1,h_2,\dots,h_{n-m}) \\
&=\sum\pi^{(n)}_*(f)(g_1,g_2,\dots,g_m,h_1,\dots,h_{n-m})
\cdot\xi(g_1,g_2,\dots,g_m), 
\end{align*}
where the sum is over all $(g_1,g_2,\dots,g_m)\in\H^{(m)}$ 
satisfying $s(g_m)=r(h_1)$. 
By the definition of $\pi^{(n)}_*$, we get 
\begin{align}
& \left(\pi^{(n)}_*(f)\frown\xi\right)(h_1,h_2,\dots,h_{n-m}) \notag \\
&=\sum f(g'_1,g'_2,\dots,g'_m,h'_1,\dots,h'_{n-m})
\cdot\xi(\pi(g'_1),\pi(g'_2),\dots,\pi(g'_m)) \notag \\
&=\sum f(g'_1,g'_2,\dots,g'_m,h'_1,\dots,h'_{n-m})
\cdot(\xi\circ\pi^{(m)})(g'_1,g'_2,\dots,g'_m), \label{lhs}
\end{align}
where the sum is over 
all $(g'_1,\dots,g'_m,h'_1,\dots,h'_{n-m})\in\G^{(n)}$ 
satisfying $\pi^{(n-m)}(h'_1,\dots,h'_{n-m})=(h_1,\dots,h_{n-m})$. 
On the other hand, 
\begin{align}
& \pi^{(n-m)}_*\left(f\frown(\xi\circ\pi^{(m)}))\right)
(h_1,h_2,\dots,h_{n-m}) \notag \\
&=\sum\left(f\frown(\xi\circ\pi^{(m)}))\right)
(h'_1,h'_2,\dots,h'_{n-m}) \notag \\
&=\sum f(g'_1,g'_2,\dots,g'_m,h'_1,\dots,h'_{n-m})
\cdot(\xi\circ\pi^{(m)})(g'_1,g'_2,\dots,g'_m), \label{rhs}
\end{align}
where the sum is over all $(g'_1,\dots,g'_m)\in\G^{(m)}$ 
and $(h'_1,\dots,h'_{n-m})\in\G^{(n-m)}$ 
satisfying $s(g'_m)=r(h'_1)$ and 
$\pi^{(n-m)}(h'_1,\dots,h'_{n-m})=(h_1,\dots,h_{n-m})$. 
By \eqref{lhs} and \eqref{rhs}, we obtain the desired equality. 
\end{proof}

\section{Automorphisms of $C^*$-algebras}

This section discusses 
how cap products serve as invariants of automorphisms of $C^*$-algebras.
In general, an automorphism $\alpha$ of a unital $C^*$-algebra $A$ 
is said to be asymptotically inner, 
if there exists a continuous family of unitaries 
$(u_t)_{t\in[0,\infty)}$ in $A$ such that 
\[
\alpha(a)=\lim_{t\to\infty}u_tau_t^*
\]
holds for all $a\in A$. 
It is an important problem to determine 
if a given automorphism is asymptotically inner or not, 
and a number of results are found in the literature 
(see for example \cite{Ph00DocM,KK01JOP,Li09AJM}). 
Here, we will use the following theorem of H. Lin. 

\begin{theorem}[{\cite[Corollary 11.2]{Li09AJM}}]\label{Lin}
Let $A$ be a unital separable simple $C^*$-algebra 
with tracial rank zero and satisfying the UCT. 
For $\alpha\in\Aut(A)$, the following are equivalent. 
\begin{enumerate}
\item $\alpha$ is asymptotically inner. 
\item $KK(\alpha)=KK(\id)$ and 
the rotation map induced by $\alpha$ is zero. 
\end{enumerate}
\end{theorem}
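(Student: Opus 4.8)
The plan is to treat the two implications separately; the forward direction $(1)\Rightarrow(2)$ is soft, whereas the reverse direction $(2)\Rightarrow(1)$ is where the classification theory for tracial rank zero $C^*$-algebras is needed, and is the heart of the matter.

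For $(1)\Rightarrow(2)$, suppose $\alpha$ is asymptotically inner, with continuous unitaries $(u_t)_{t\in[0,\infty)}$ satisfying $\alpha(a)=\lim_{t\to\infty}u_tau_t^*$. First I would argue $KK(\alpha)=KK(\id)$: the family $t\mapsto\operatorname{Ad}(u_t)$ is a path of inner automorphisms, each inducing the identity on $KK$, and it converges in the point-norm topology to $\alpha$; since the $KK$-class is invariant under such asymptotic homotopies, $KK(\alpha)=KK(\id)$ follows. For the rotation map, recall that it is a secondary invariant, defined precisely because $KK(\alpha)=KK(\id)$ holds, taking values in a quotient of $\Aff(T(A))$ and measuring the trace-valued winding of any implementing path of unitaries through the de la Harpe--Skandalis determinant. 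The existence of the genuine continuous implementing path $(u_t)$ exhibits this winding as a coboundary, so the rotation map vanishes.

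For $(2)\Rightarrow(1)$, the plan is to invoke Lin's classification machinery for the class of unital separable simple $C^*$-algebras of tracial rank zero satisfying the UCT, which are classified by their Elliott invariants. The key input is the asymptotic uniqueness theorem: two unital monomorphisms (here, automorphisms) that agree on $KK$ and have the same rotation data are asymptotically unitarily equivalent. Applying this with $\id$ as the second automorphism, the hypotheses $KK(\alpha)=KK(\id)$ and vanishing rotation map give that $\alpha$ and $\id$ are asymptotically unitarily equivalent, i.e.\ there is a continuous path $(u_t)_{t\in[0,\infty)}$ of unitaries with $\alpha(a)=\lim_{t\to\infty}u_tau_t^*$ for all $a$, which is exactly the assertion that $\alpha$ is asymptotically inner.

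I expect the main obstacle to lie entirely in the asymptotic uniqueness theorem underlying $(2)\Rightarrow(1)$. One must first obtain, via an Elliott-type intertwining, a sequence of \emph{approximate} unitary equivalences between $\alpha$ and $\id$ on larger and larger finite subsets with smaller and smaller tolerances; the genuinely hard step is to assemble the resulting local unitaries into a single \emph{continuous} path over $[0,\infty)$. This is where the two conditions in (2) enter decisively: $KK(\alpha)=KK(\id)$ controls the $K$-theoretic obstruction to approximate unitary equivalence, while the vanishing of the rotation map is precisely what removes the remaining trace-valued obstruction to continuity of the path, allowing the local pieces to be connected without introducing a nontrivial determinant. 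Verifying that the rotation map is exactly this obstruction, and no more, is the crux of Lin's argument.
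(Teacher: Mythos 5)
This statement is not proved in the paper at all: it is quoted verbatim from Lin's work (Corollary 11.2 of \cite{Li09AJM}) and used as a black box, so there is no in-paper argument to compare against. Judged on its own, your proposal is a reasonable road map of how Lin's proof actually goes, but it is a summary of strategy rather than a proof. The forward direction is essentially right: asymptotic unitary equivalence to the identity forces $KK(\alpha)=KK(\id)$, and the implementing path witnesses the vanishing of the de la Harpe--Skandalis winding, hence of the rotation map (this is the content of Lin's Theorem 4.3, which the present paper also cites separately). The reverse direction, however, is deferred entirely to ``the asymptotic uniqueness theorem,'' which for an automorphism of a tracial-rank-zero algebra \emph{is} the theorem being proved; invoking it here is circular unless you actually establish it. The genuinely hard content --- Lin's Basic Homotopy Lemma, the Elliott-type intertwining producing approximate unitary equivalences on exhausting finite subsets, and the argument that the vanishing rotation map is exactly the obstruction to splicing the local unitaries into a single continuous path with controlled determinant --- is named but not supplied. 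So the proposal correctly locates where the difficulty lives, but it does not close the gap; as written it would be an acceptable justification for \emph{citing} Lin's result (which is what the paper does), not a proof of it.
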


Throughout this section, we let $\G$ denote an ample groupoid 
arising from a free minimal action of $\Z^N$ on the Cantor set. 
Set $\T:=\{z\in\C\mid\lvert z\rvert=1\}$. 

Let $A:=C^*_r(\G)$ be the reduced groupoid $C^*$-algebra of $\G$. 
When $\xi\in C(\G,\T)$ is a cocycle, 
one can define an automorphism $\alpha_\xi\in\Aut(A)$ by 
\[
\alpha_\xi(a)(g):=\xi(g)a(g)
\]
for $a\in A$ and $g\in\G$. 
Clearly $\alpha_\xi$ is inner 
if and only if $[\xi]=0$ in $H^1(\G,\T)$ 
(see \cite[Proposition 5.7]{Ma12PLMS}). 
We examine conditions under which $\alpha_\xi$ is asymptotically inner. 

We let $M(\G)$ denote 
the set of all $\G$-invariant probability measures on $\G^{(0)}$ 
endowed with the weak-$*$ topology. 
As $\G$ is the ample groupoid of a free action, 
$M(\G)$ can be identified with the space of tracial states $T(A)$ on $A$. 
Let $\Aff(M(\G))$ be 
the space of continuous affine functions $M(\G)\to\R$. 
There exists a natural homomorphism $D_\G:H_0(\G)\to\Aff(M(\G))$ 
defined by 
\[
D_\G([f])(\mu):=\int_{\G^{(0)}}f\ d\mu
\]
for $[f]\in H_0(\G)$ and $\mu\in M(\G)$. 
Define a homomorphism 
\[
\rho_\G:H_0(\G,\T)\to\Aff(M(\G))/\Im D_\G
\]
as follows. 
For $h\in C(\G^{(0)},\T)$, 
we pick $f\in C(\G^{(0)},\R)$ such that $h(x)=e^{2\pi if(x)}$, 
and set 
\[
\rho_\G([h])(\mu):=\int_{\G^{(0)}}f\ d\mu+\Im D_\G. 
\]
It is not so hard to see that 
$\rho_\G$ is a well-defined homomorphism. 

\begin{proposition}\label{asympinner}
Let $N\leq3$. 
If $\alpha_\xi$ is asymptotically inner, then 
\[
\rho_\G\left(c\frown[\xi]\right)=0
\]
for any $c\in H_1(\G)$. 
\end{proposition}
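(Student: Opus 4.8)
The plan is to reduce the statement to H.\ Lin's criterion (Theorem \ref{Lin}) and then to identify the rotation map of $\alpha_\xi$ with the assignment $c\mapsto\rho_\G(c\frown[\xi])$. First I would check that $A=C^*_r(\G)$ satisfies the hypotheses of Theorem \ref{Lin}: since the $\Z^N$-action is free and minimal, $A$ is unital (the Cantor set is compact), separable and simple, it has tracial rank zero because these crossed products are classifiable, and it satisfies the UCT as an amenable crossed product. Because $\G$ comes from a $\Z^N$-action, the cocycle $\xi$ lifts to a homomorphism $\zeta\colon\G\to\R$, so the path $\xi_t:=\exp(2\pi i t\zeta)$ joins the constant cocycle to $\xi$ and the path $\alpha_{\xi_t}$ joins $\id$ to $\alpha_\xi$ in $\Aut(A)$. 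In particular $\alpha_\xi$ is homotopic to $\id$, whence $KK(\alpha_\xi)=KK(\id)$, and so if $\alpha_\xi$ is asymptotically inner then Theorem \ref{Lin} forces the rotation map induced by $\alpha_\xi$ to vanish.

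The heart of the argument is to compute this rotation map in groupoid terms. For $N\le 3$ one has $H_j(\G)=0$ for $j>N$ and no differential of the spectral sequence $H_p(\Z^N,C(\G^{(0)},\Z))\Rightarrow K_p(A)$ reaches $H_1(\G)$, so $H_1(\G)$ is a direct summand of $K_1(A)$ and there is a canonical map $\kappa\colon H_1(\G)\to K_1(A)$. Using the path $\{\alpha_{\xi_t}\}$, the rotation map is evaluated by the de la Harpe--Skandalis determinant. For the canonical unitary $u_\gamma=1_{\{\gamma\}\times\G^{(0)}}$ attached to $\gamma\in\Z^N$ one has $\alpha_{\xi_t}(u_\gamma)=\exp(2\pi i t\,\zeta(\gamma,\cdot))u_\gamma$ and $u_\gamma u_\gamma^*=1$, so
\[
\frac{1}{2\pi i}\int_0^1 \tau_\mu\!\Big(\tfrac{d}{dt}\,\alpha_{\xi_t}(u_\gamma)\,\alpha_{\xi_t}(u_\gamma)^*\Big)\,dt
=\int_{\G^{(0)}}\zeta(\gamma,\cdot)\,d\mu .
\]
On the other hand the cap-product formula gives $(1_{\{\gamma\}\times\G^{(0)}}\frown\xi)(x)=\xi(\gamma,x)=\exp(2\pi i\,\zeta(\gamma,x))$, so $\rho_\G\big([1_{\{\gamma\}\times\G^{(0)}}]\frown[\xi]\big)(\mu)=\int_{\G^{(0)}}\zeta(\gamma,\cdot)\,d\mu$ modulo $\Im D_\G$. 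The two expressions coincide, identifying the rotation map with $c\mapsto\rho_\G(c\frown[\xi])$ on these classes.

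To conclude I would extend this identification from the generators $[1_{\{\gamma\}\times\G^{(0)}}]$ to all of $H_1(\G)$. Since both $c\mapsto\bar R_{\alpha_\xi}(\kappa(c))$ and $c\mapsto\rho_\G(c\frown[\xi])$ are homomorphisms, and the cap product is functorial (as in the propositions above), it suffices to carry out the determinant computation over a generating set of $H_1(\G)$ represented by functions supported on compact open bisections; the same computation then produces the integral of $f\frown\zeta$ against $\mu$. Once the identification holds on all of $H_1(\G)$, vanishing of the rotation map yields $\rho_\G(c\frown[\xi])=0$ for every $c\in H_1(\G)$.

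I expect the main obstacle to be exactly this last identification: making the edge map $\kappa\colon H_1(\G)\to K_1(A)$ concrete enough that the de la Harpe--Skandalis determinant of $\{\alpha_{\xi_t}\}$ can be evaluated on an arbitrary $1$-cycle (not merely on the full-bisection unitaries $u_\gamma$, which are the only basic cycles for a minimal action), together with matching the two indeterminacy subgroups. Concretely, one must verify that the trace-pairing image $\rho_A(K_0(A))$, modulo which the rotation map is defined, coincides with $\Im D_\G$, which amounts to checking that the higher homology contributions to $K_0(A)$ pair trivially with traces for these minimal systems; this is where the restriction $N\le 3$ is used to keep the spectral-sequence bookkeeping under control.
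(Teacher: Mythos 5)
Your proposal contains a genuine gap, and it is exactly the one you flag yourself at the end: the identification of the determinant/rotation-map pairing with $c\mapsto\rho_\G(c\frown[\xi])$ is only carried out on the classes $[1_{\{\gamma\}\times\G^{(0)}}]$, $\gamma\in\Z^N$, and these do \emph{not} generate $H_1(\G)$ (which is $H_1(\Z^N,C(\G^{(0)},\Z))$, typically much larger than $\Z^N$). Your suggested fix --- extend by additivity over a generating set of cycles supported on compact open bisections --- is circular as stated: a general $1$-cycle is not a unitary of $C^*_r(\G)$, and to know that the determinant computation descends to homology you would already need that boundaries pair into $\Im D_A$, which is another computation of the same kind. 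The missing idea, which the paper supplies, is that $\G$ is almost finite (\cite[Lemma 6.3]{Ma12PLMS}) and hence by \cite[Theorem 7.5]{Ma12PLMS} \emph{every} class in $H_1(\G)$ is represented by $1_U$ for a single full bisection $U$ (with $r(U)=\G^{(0)}=s(U)$). Such a $1_U$ is honestly a unitary of $A$, $(1_U)^*\alpha_\xi(1_U)$ is a $\T$-valued function $h$ on $\G^{(0)}$ with $h(s(g))=\xi(g)$ for $g\in U$, and one checks directly that $1_U\frown\xi=h$; the de la Harpe--Skandalis determinant of $h$ is then computed along the path $e^{2\pi itf}$, $h=e^{2\pi if}$. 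This makes the extension problem disappear entirely.

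Two further points. First, the paper does not invoke Theorem \ref{Lin} for this direction (doing so forces you to verify tracial rank zero and the UCT); it uses only \cite[Theorem 4.3]{Li09AJM}, which gives $\Delta_A(u^*\alpha_\xi(u))=0$ for all $u\in U_\infty(A)$ whenever $\alpha_\xi$ is asymptotically inner --- a lighter and more robust tool here. Second, your account of where $N\leq3$ enters is not accurate: the issue is not spectral-sequence bookkeeping for higher homology, but the gap labelling theorem, i.e.\ the equality $\Im D_A=\Im D_\G$ under the identification $T(A)\cong M(\G)$. You correctly isolate this as the indeterminacy-matching step, but it is a deep input (\cite{BKL01CRASParis,ADRS21JGP}), and the restriction $N\leq3$ is imposed precisely because the published proofs for general $N$ are known to have issues.
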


\begin{proof}
We write 
\[
\Delta_\tau:U_\infty(A)_0\to\R/D_A(K_0(A))(\tau)
\]
for the de la Harpe-Skandalis determinant \cite{MR0743629} 
associated with a tracial state $\tau\in T(A)$ on $A$, 
where $D_A:K_0(A)\to\Aff(T(A))$ is the dimension map 
defined by $D_A([p])(\tau)=\tau(p)$. 
Let us recall the definition of $\Delta_\tau$ briefly. 
Suppose that we are given a unitary $u\in U_n(A)_0$. 
There exists a piecewise smooth path $(u_t)_{t\in[0,1]}$ 
in $U_n(A)_0$ such that $u_0=1$, $u_1=u$. 
Then, the de la Harpe-Skandalis determinant of $u$ is 
\[
\Delta_\tau(u)=\frac{1}{2\pi i}
\int_0^1\tau\left(\frac{d\,u_t}{dt}u_t^*\right)\ dt
+D_A(K_0(A))(\tau). 
\]
It is well-known that $\Delta_A(u)(\tau):=\Delta_\tau(u)$ 
gives a homomorphism 
\[
\Delta_A:U_\infty(A)_0\to\Aff(T(A))/\Im D_A, 
\]
see \cite{Ma11JFA} for instance. 
By \cite[Theorem 4.3]{Li09AJM} and its proof, 
when $\alpha_\xi$ is asymptotically inner, 
we have $\Delta_A(u^*\alpha_\xi(u))=0$ 
for all $u\in U_\infty(A)$. 

The groupoid $\G$ arises from a free action of $\Z^N$, 
and so $\G$ is almost finite by \cite[Lemma 6.3]{Ma12PLMS}. 
Therefore, by \cite[Theorem 7.5]{Ma12PLMS}, 
any element $c$ of $H_1(\G)$ is represented 
by a cycle of the form $1_U\in C_c(\G,\Z)$, 
where $U$ is a bisection such that $r(U)=\G^{(0)}=s(U)$. 
We may regard $1_U$ as a unitary in $A=C^*_r(\G)$. 
By a straightforward computation, we see that 
$h:=(1_U)^*\alpha_\xi(1_U)$ is a continuous function on $\G^{(0)}$ 
such that $h(s(g))=\xi(g)$ for all $g\in U$. 
Pick $f\in C(\G^{(0)},\R)$ such that $h(x)=e^{2\pi if(x)}$, 
and put $h_t(x):=e^{2\pi itf(x)}$ for $t\in[0,1]$. 
Then, $(h_t)_{t\in[0,1]}$ is a smooth path  of unitaries 
satisfying $h_0=1$ and $h_1=h$. 
As 
\[
\frac{d\,h_t}{dt}h_t^*=2\pi ifh_th^*_t=2\pi if, 
\]
$\Delta_\tau(h)=\tau(f)+D_A(K_0(A))(\tau)$ holds for every $\tau\in T(A)$. 
It follows from $\Delta_A(h)=0$ that we can conclude 
the map $\tau\mapsto\tau(f)$ belongs to $\Im D_A$. 

Now, let us consider $\rho_\G\left([1_U]\frown[\xi]\right)$. 
By the definition of the cap product, 
\[
(1_U\frown\xi)(x)=\sum_{g\in s^{-1}(x)}1_U(g)\cdot\xi(g)
\]
for all $x\in\G^{(0)}$. 
Thus, $1_U\frown\xi=h$, 
and $\rho_\G\left([1_U]\frown[\xi]\right)$ is equal to 
the equivalence class of the map 
\[
\mu\mapsto\int_{\G^{(0)}}f\ d\mu
\]
in $\Aff(M(\G))/\Im D_\G$. 
Since $\G$ is the transformation groupoid of 
a free minimal action of $\Z^N$ with $N\leq3$, 
under the identification of $T(A)$ with $M(\G)$, the gap labeling theorem 
\cite{BKL01CRASParis,ADRS21JGP} (see also the remark below) 
implies $\Im D_A=\Im D_\G$. 
Hence we obtain $\rho_\G\left([1_U]\frown[\xi]\right)=0$. 
\end{proof}

\begin{remark}
Let $\G$ be an ample groupoid 
arising from a free action of $\Z^N$ on the Cantor set 
and let $A:=C^*_r(\G)$. 
The gap labelling conjecture asks whether or not the image of the map 
\[
D_\G:H_0(\G)\to\Aff(M(\G))
\]
and the image of the map 
\[
D_A:K_0(A)\to\Aff(T(A))
\]
agree under the identification of $T(A)$ with $M(\G)$. 
Several proofs for this conjecture were proposed 
(\cite{BBG06CMP,MR2018220,KP03Michigan}), 
but it is known that they have issues 
(see page 1663 of \cite{GHK13AGT} for instance). 
This is why we need to assume $N\leq3$ in the proposition above. 
\end{remark}

\begin{remark}
Given a cocycle $\xi\in C(\G,\T)$, 
one can consider a $\G$-sheaf $A$ of 
germs of continuous $\Z\times\T$-valued functions 
such that the $\G$-action is given by 
the translation by $\xi$ on the $\T$-component. 
More precisely, for any compact open bisection $U\subset\G$, 
the $\G$-action is described as follows: 
\[
\left((f_1,f_2)\cdot g\right)(r(g))
=\left(f_1(s(g)),f_2(s(g))+f_1(s(g))\xi(g)\right)
\]
for $f_1\in C(s(U),\Z)$, $f_2\in C(s(U),\T)$ and $g\in U$. 
By \cite[Section 3.6]{CM00crelle}, 
we obtain the long exact sequence in homology: 
\[
\xymatrix@M=8pt{
\cdots \ar[r] & 
H_{n+1}(\G,\Z) \ar[r] & 
H_n(\G,\T) \ar[r] & 
H_n(\G;A) \ar[r] & 
H_n(\G,\Z) \ar[r] & \cdots .
}
\]
The cap product $c\mapsto c\frown[\xi]$ appeared 
in the proof of Proposition \ref{asympinner} equals 
the homomorphism $H_1(\G,\Z)\to H_0(\G,\T)$ in this long exact sequence. 
\end{remark}

When $\G$ comes from an action of $\Z^2$ and $H_1(\G)$ is free, 
the converse of the proposition above is also true. 

\begin{theorem}\label{asympinnerZ2}
Suppose that $\G$ is an ample groupoid 
arising from a free minimal action of $\Z^2$ on the Cantor set. 
Let the notation be as above. 
Assume further that $H_1(\G)$ is free. 
For a cocycle $\xi:\G\to\T$, the following are equivalent. 
\begin{enumerate}
\item $\alpha_\xi$ is asymptotically inner. 
\item $\rho_\G\left(c\frown[\xi]\right)=0$ for any $c\in H_1(\G)$. 
\end{enumerate}
\end{theorem}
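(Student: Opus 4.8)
The plan is to combine the forward implication already in hand with H. Lin's criterion. Since (1)$\Rightarrow$(2) is exactly Proposition \ref{asympinner} (applicable because $N=2\le3$), I only need (2)$\Rightarrow$(1). As $A=C^*_r(\G)$ is a unital separable simple $C^*$-algebra with tracial rank zero satisfying the UCT, Theorem \ref{Lin} reduces asymptotic innerness of $\alpha_\xi$ to two tasks, which I would carry out separately: showing $KK(\alpha_\xi)=KK(\id)$, and showing that the rotation map induced by $\alpha_\xi$ vanishes.

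First I would establish $KK(\alpha_\xi)=KK(\id)$ by proving that $\alpha_\xi$ acts trivially on total $K$-theory. On $K_1(A)\cong H_1(\G)$ this is direct: a generator is a bisection unitary $1_U$, and $\alpha_\xi(1_U)=\xi|_U\cdot1_U$ differs from $1_U$ only by the phase $\xi|_U\colon s(U)\to\T$; since $\G^{(0)}$ is totally disconnected this phase lifts to an $\R$-valued function, yielding a homotopy of unitaries, so $\alpha_{\xi*}=\id$ on $K_1(A)$. On $K_0(A)$, which is assembled from $H_0(\G)$ and $H_2(\G)\cong\Z$, the classes coming from $H_0(\G)$ are represented by projections in $C(\G^{(0)})$, which $\alpha_\xi$ fixes; the $H_2(\G)$-summand requires more care and is discussed below. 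Once $\alpha_{\xi*}=\id$ on $K_0(A)$ and $K_1(A)$ is known, I would upgrade this to $KK(\alpha_\xi)=KK(\id)$ via the UCT: here the hypothesis that $H_1(\G)=K_1(A)$ is \emph{free} is decisive, because it kills $\operatorname{Tor}(K_1(A),\Z/n)$ and the term $\Ext(K_1(A),K_0(A))$, so that the trivial action on $K_0\oplus K_1$ propagates through the Bockstein structure to a trivial action on $\underline K(A)$, and hence to the full $KK$-class.

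For the rotation map I would first note that $\alpha_\xi$ fixes every tracial state (as $\xi\equiv1$ on $\G^{(0)}$, one has $\tau_\mu\circ\alpha_\xi=\tau_\mu$), and that $u^*\alpha_\xi(u)\in U_\infty(A)_0$ once $\alpha_{\xi*}=\id$ on $K_1$. I would then check that $[u]\mapsto\Delta_A(u^*\alpha_\xi(u))$ is a well-defined homomorphism $K_1(A)\to\Aff(T(A))/\Im D_A$: the homomorphism property follows from $\Delta_A$ being a homomorphism together with the trace-invariance $\Delta_A(\alpha_\xi(v))=\Delta_A(v)$, while vanishing on $U_\infty(A)_0$ follows from a short computation showing that along a path $u_t$ from $1$ the de la Harpe--Skandalis integrand $\tau\!\left(\tfrac{d}{dt}(u_t^*\alpha_\xi(u_t))\,(u_t^*\alpha_\xi(u_t))^*\right)$ is identically zero. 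Since $\G$ is almost finite, $K_1(A)\cong H_1(\G)$ is generated by the bisection unitaries $1_U$, and the computation in Proposition \ref{asympinner} identifies $\Delta_A(1_U^*\alpha_\xi(1_U))$ with $\rho_\G([1_U]\frown[\xi])$ through the gap labelling theorem $\Im D_A=\Im D_\G$. Hypothesis (2) then forces this homomorphism to vanish on all generators, hence on all of $K_1(A)$, so $\Delta_A(u^*\alpha_\xi(u))=0$ for every $u$; by \cite[Theorem 4.3]{Li09AJM} this is precisely the statement that the rotation map is zero, and Theorem \ref{Lin} concludes the proof.

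The hard part will be the $K_0$-statement on the $H_2(\G)\cong\Z$ summand and its interaction with possible torsion or infinitesimals in $H_0(\G)$. A generator of $H_2(\G)$ is a Bott-type class built from the two commuting canonical unitaries $u_1,u_2$, and under $\alpha_\xi$ these become $\xi_1u_1,\xi_2u_2$, so $\alpha_{\xi*}$ may a priori alter this class by the lifting obstruction $\partial[\xi]\in H^2(\G,\Z)\cong H_0(\G)$ coming from the exponential sequence. I expect this correction to be controlled exactly by the interplay between $N=2$ (which gives the clean identification $K_1(A)\cong H_1(\G)$ and makes the UCT upgrade available) and the freeness of $H_1(\G)$; verifying that the correction vanishes, and more generally that no torsion phenomenon in $K_0(A)$ obstructs $KK$-triviality, is the step I would expect to demand the most care.
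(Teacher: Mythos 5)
Your reduction to Theorem \ref{Lin} is the right frame, and your handling of (1)$\Rightarrow$(2) and of the generators $[1_U]$ of $K_1(A)\cong H_1(\G)$ matches the paper; but the (2)$\Rightarrow$(1) direction has two genuine gaps. The first concerns $KK$-triviality. You try to prove $\alpha_{\xi*}=\id$ on $K_*(A)$ directly and then upgrade via the UCT, but (a) you explicitly leave the action on the $H_2(\G)\cong\Z$ summand of $K_0(A)$ unresolved, and (b) the upgrade does not go through: freeness of $K_1(A)$ kills $\Ext(K_1(A),K_0(A))$ but not $\Ext(K_0(A),K_1(A))$, and $K_0(A)$ (which contains $H_0(\G)$) need not be free, so triviality on $K_*$ does not force $KK(\alpha_\xi)=KK(\id)$. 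The paper sidesteps all of this: since the action is by $\Z^2$, the cocycle $\xi$ lifts to an $\R$-valued cocycle $\eta$ with $\xi=e^{2\pi i\eta}$, and $\xi_t:=e^{2\pi it\eta}$ yields a continuous path $(\alpha_{\xi_t})_t$ in $\Aut(A)$ from $\id$ to $\alpha_\xi$, whence $KK(\alpha_\xi)=KK(\id)$ by homotopy invariance, with no $K$-theory computation at all.

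The second gap is in the rotation-map step. Vanishing of $\Delta_A(u^*\alpha_\xi(u))$ in $\Aff(T(A))/\Im D_A$ is not literally the statement that Lin's rotation map is zero: the latter requires an actual splitting $\theta:K_1(A)\to K_1(M)$ of $\pi_*$ for the mapping torus $M$ with $R\circ\theta=0$ in $\Aff(T(A))$, not merely modulo $\Im D_A$. The paper uses the homotopy above to define the section $\beta:A\to M$, $\beta(a)(t)=\alpha_{\xi_t}(a)$, computes $R([\beta(1_U)])(\tau)=\tau(f)$, invokes hypothesis (2) together with the gap labelling theorem ($\Im D_A=\Im D_\G$) to conclude $R([\beta(1_U)])\in\Im D_A$, and only then uses the freeness of $K_1(A)\cong H_1(\G)$ to correct $\beta_*$ by elements of $\iota_*(K_0(A))$ so as to obtain $\theta$ with $R\circ\theta=0$ exactly. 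Your proposal never performs this correction and instead spends the freeness hypothesis on the (flawed) UCT step, so the hypothesis is not doing the work it must do. You would need to restructure the argument along these lines for the proof to close.
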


\begin{proof}
The implication (1)$\implies$(2) is already shown 
in Proposition \ref{asympinner}. 
We prove (2)$\implies$(1). 
By \cite[Theorem B]{Ni24CJM} (see also \cite{KN2107arXiv}), 
$A$ is a unital separable simple $C^*$-algebra 
with tracial rank zero and satisfying the UCT. 
We would like to apply Theorem \ref{Lin} 
(see also \cite[Theorem 3.1]{KK01JOP} and \cite[Theorem 10.7]{Li09AJM}). 

Since $\Z^2$ is free, we can find a cocycle $\eta:\G\to\R$ 
such that $\xi(g)=e^{2\pi i\eta(g)}$ for all $g\in\G$. 
For each $t\in[0,1]$, 
we define $\xi_t:\G\to\T$ by $\xi_t(g)=e^{2\pi it\eta(g)}$. 
Then $(\xi_t)_t$ is a continuous path of cocycles 
such that $\xi_0=1$ and $\xi_1=\xi$. 
It follows that 
$(\alpha_{\xi_t})_t$ is a continuous path in $\Aut(A)$ 
such that $\alpha_{\xi_0}=\id$ and $\alpha_{\xi_1}=\alpha_\xi$. 
In particular, we have $KK(\alpha_\xi)=KK(\id)$. 

It remains to show that the rotation map vanishes 
(\cite[Section 3]{Li09AJM}). 
Consider the following short exact sequence of $C^*$-algebras: 
\[
\xymatrix@M=8pt{
0 \ar[r] & SA \ar[r]^{\iota} & M \ar[r]^{\pi} & A \ar[r] & 0,
}
\]
where 
\[
M:=\left\{x\in C([0,1],A)\mid\alpha_\xi(x(0))=x(1)\right\}
\]
is the mapping torus of $\alpha_\xi$. 
We define a homomorphism $\beta:A\to M$ 
by $\beta(a)(t):=\alpha_{\xi_t}(a)$ for $t\in[0,1]$. 
Clearly $\beta$ is a right inverse of $\pi$. 
Let $R:K_1(M)\to\Aff(T(A))$ be the rotation map 
defined in \cite[Definition 3.2]{Li09AJM}. 
By \cite[Lemma 3.3]{Li09AJM}, we have the following diagram: 
\[
\xymatrix@M=8pt{
0 \ar[r] & K_0(A) \ar[r]^{\iota_*} \ar[dr]_{D_A} & 
K_1(M) \ar@<0.5ex>[r]^{\pi_*} \ar[d]^{R} & 
K_1(A) \ar[r] \ar@<0.5ex>[l]^{\beta_*} & 0 \\
& & \Aff(T(A)) & &
}
\]
We would like to see that 
$\Im(R\circ\beta_*)$ is contained in $\Im D_A$. 
Take a bisection $U\subset\G$ such that $r(U)=\G^{(0)}=s(U)$. 
As 
\[
\frac{d}{dt}\alpha_{\xi_t}(1_U)(g)=2\pi i\eta(g)1_U(g), 
\]
letting $f:\G^{(0)}\to\R$ be 
such that $f(s(g))=\eta(g)$ for all $g\in U$, 
one has 
\[
R([\beta(1_U)])(\tau)
=\frac{1}{2\pi i}\int_0^1\tau(2\pi if)\ dt=\tau(f)
\]
for each $\tau\in T(A)$. 
In the same way as Proposition \ref{asympinner}, 
we can see $1_U\frown\xi=e^{2\pi if}$, and so 
\[
0=\rho_\G([1_U]\frown[\xi])=\tau(f)+\Im D_A
=R([\beta(1_U)])+\Im D_A. 
\]
Therefore $R([\beta(1_U)])$ is contained in $\Im D_A$. 
Since $\G$ is the transformation groupoid of a $\Z^2$-action, 
$K_1(A)$ is canonically isomorphic to $H_1(\G)$. 
Moreover, 
as explained in the proof of Proposition \ref{asympinner}, 
every element of $H_1(\G)$ can be represented of the form $[1_U]$. 
This proves that 
the image of $R\circ\beta_*$ is contained in $\Im D_A$. 
Hence we may find a homomorphism $\theta:K_1(A)\to K_1(M)$ 
such that $\pi_*\circ\theta=\id$ and $R\circ\theta=0$, 
because $K_1(A)\cong H_1(\G)$ is free. 
According to Theorem \ref{Lin}, 
one can conclude that $\alpha_\xi$ is asymptotically inner. 
\end{proof}

\begin{remark}
When $\G$ is the transformation groupoid of a minimal $\Z$-action, 
the theorem above has been shown in \cite[Proposition 6.1]{Ma01CJM}. 
In this case $H_1(\G)$ is $\Z$. 
\end{remark}

\section{Examples}

\subsection{SFT groupoids}

In this subsection, 
we compute a cap product $\frown\ :H_1\times H^1\to H_0$ 
in the setting of SFT groupoids. 
The reader may refer to \cite{Ma12PLMS,Ma15crelle} 
for a more complete background of SFT groupoids. 

Let $(V,E)$ be a finite directed graph, 
where $V$ is a finite set of vertices 
and $E$ is a finite set of edges. 
For $e\in E$, $i(e)$ denotes the initial vertex of $e$ and 
$t(e)$ denotes the terminal vertex of $e$. 
Let $A:=(A(v,w))_{v,w\in V}$ be 
the adjacency matrix of $(V,E)$, that is, 
\[
A(v,w):=\#\{e\in E\mid i(e)=v,\ t(e)=w\}. 
\]
We assume that $A$ is irreducible 
(i.e.\ for all $v,w\in V$ 
there exists $n\in\N$ such that $A^n(v,w)>0$) 
and that $A$ is not a permutation matrix. 
Define 
\[
X:=\{(x_k)_{k\in\N}\in E^\N
\mid t(x_k)=i(x_{k+1})\quad\forall k\in\N\}. 
\]
With the product topology, $X$ is a Cantor set. 
Define a surjective continuous map $\sigma:X\to X$ by 
\[
\sigma(x)_k:=x_{k+1}\quad k\in\N,\ x=(x_k)_k\in X. 
\]
In other words, $\sigma$ is the (one-sided) shift on $X$. 
Observe that $\sigma$ is a local homeomorphism. 
The dynamical system $(X,\sigma)$ is called 
the one-sided irreducible shift of finite type (SFT) 
associated with the graph $(V,E)$ (or the matrix $A$). 

The ample groupoid $\G_{(V,E)}$ is given by 
\[
\G_{(V,E)}:=\{(x,n,y)\in X\times\Z\times X\mid
\exists k,l\in\N,\ n=k{-}l,\ \sigma^k(x)=\sigma^l(y)\}. 
\]
The topology of $\G_{(V,E)}$ is generated by the sets 
\[
\{(x,k{-}l,y)\in\G_{(V,E)}\mid x\in P,\ y\in Q,\ \sigma^k(x)=\sigma^l(y)\}, 
\]
where $P,Q\subset X$ are open and $k,l\in\N$. 
Two elements $(x,n,y)$ and $(x',n',y')$ in $\G_{(V,E)}$ are composable 
if and only if $y=x'$, and the multiplication and the inverse are 
\[
(x,n,y)\cdot(y,n',y')=(x,n{+}n',y'),\quad (x,n,y)^{-1}=(y,-n,x). 
\]
We identify $X$ with the unit space $\G_{(V,E)}^{(0)}$ 
via $x\mapsto(x,0,x)$. 
We call $\G_{(V,E)}$ the SFT groupoid 
associated with the graph $(V,E)$. 

The homology groups of the SFT groupoid $\G_{(V,E)}$ was 
computed in \cite{Ma12PLMS}. 

\begin{theorem}[{\cite[Theorem 4.14]{Ma12PLMS}}]
Let $(V,E)$, $A$ and $\G_{(V,E)}$ be as above. 
Then one has 
\[
H_n(\G_{(V,E)})\cong\begin{cases}\Coker(I-A^t)&n=0\\
\Ker(I-A^t)&n=1\\0&n\geq2, \end{cases}
\]
where the matrix $A$ acts on the abelian group $\Z^V$ by multiplication. 
\end{theorem}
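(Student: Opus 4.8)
The plan is to regard $\G_{(V,E)}$ as the Deaconu--Renault groupoid of the surjective local homeomorphism $\sigma\colon X\to X$ and to peel off the dynamics in the first coordinate by means of the canonical $\Z$-valued cocycle. Concretely, $c\colon\G_{(V,E)}\to\Z$, $c(x,n,y):=n$, is a continuous homomorphism whose kernel
\[
\G_0:=c^{-1}(\{0\})=\{(x,0,y)\mid\exists k,\ \sigma^k(x)=\sigma^k(y)\}
\]
is the tail-equivalence groupoid, an AF groupoid written as the increasing union of the compact principal \'etale groupoids $R_k:=\{(x,0,y)\mid\sigma^k(x)=\sigma^k(y)\}$. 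First I would compute $H_*(\G_0)$. Each $R_k$ is elementary, so $H_n(R_k)=0$ for $n\ge1$, while $H_0(R_k)\cong C(X,\Z)$ via the pushforward $\sigma^k_*$, the inclusion $R_k\hookrightarrow R_{k+1}$ inducing $\sigma_*$ under these identifications. Using that $\sigma_*(1_{[e_1\cdots e_k]})=1_{[e_2\cdots e_k]}$, hence $\sigma_*(1_v)=\sum_w A(v,w)1_w$ on $\Z^V\subset C(X,\Z)$, and that $\sigma^k_*$ carries the level-$k$ cylinder functions into $\Z^V$, one identifies
\[
H_0(\G_0)\cong\varinjlim\left(\Z^V\xrightarrow{\ A^t\ }\Z^V\xrightarrow{\ A^t\ }\cdots\right)=:D_A,
\]
on which the shift induces the automorphism $\hat A$ determined by $A^t$; moreover $H_n(\G_0)=0$ for $n\ge1$.

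Second, I would invoke the Pimsner--Voiculescu type long exact sequence attached to the cocycle $c$, relating $H_*(\G_{(V,E)})$ to $H_*(\G_0)$ with the self-map induced by $\sigma$. Because $H_n(\G_0)=0$ for $n\ge1$, the sequence collapses to $H_n(\G_{(V,E)})=0$ for $n\ge2$ together with the four-term exact sequence
\[
0\longrightarrow H_1(\G_{(V,E)})\longrightarrow D_A\xrightarrow{\ \id-\hat A\ }D_A\longrightarrow H_0(\G_{(V,E)})\longrightarrow0 .
\]
Thus $H_1(\G_{(V,E)})\cong\Ker(\id-\hat A)$ and $H_0(\G_{(V,E)})\cong\Coker(\id-\hat A)$, and the task reduces to identifying these with the kernel and cokernel of $I-A^t$ on $\Z^V$.

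Third, I would carry out this identification using that $\hat A$ is an automorphism of $D_A=\varinjlim(\Z^V,A^t)$ and that every class is represented by some $v\in\Z^V$ at a finite stage. For the kernel: a class fixed by $\hat A$ is represented by some $v\in\Z^V$ with $[A^tv]=[v]$ in $D_A$, i.e.\ $(A^t)^{k+1}v=(A^t)^kv$ at a finite stage; then $w:=(A^t)^kv$ satisfies $A^tw=w$ and still represents the class, while $w\mapsto[w]$ is injective since $A^tw=w$ and $[w]=0$ force $w=0$, giving $\Ker(\id-\hat A)\cong\Ker(I-A^t)$. For the cokernel: every element of $D_A$ is represented in $\Z^V$, yielding a surjection $\Z^V\to\Coker(\id-\hat A)$; and if $v\in\Z^V$ becomes a boundary in $D_A$, then $(A^t)^mv\in\Im(I-A^t)$ for some $m$, whence $v\in\Im(I-A^t)$ because $A^t$ acts as the identity on $\Coker(I-A^t)$. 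This gives $\Coker(\id-\hat A)\cong\Coker(I-A^t)$ and completes the computation.

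The genuinely elementary part is the third paragraph; the main obstacle is the homological backbone of the first two, namely verifying that $\G_0$ is AF with $H_0(\G_0)\cong D_A$ and, above all, producing the long exact sequence for the $\Z$-cocycle $c$ with connecting map $\id-\hat A$. This is precisely where the general structure theory for the homology of ample groupoids (the spectral sequence of a groupoid extension, in the spirit of \cite{Ma12PLMS,CM00crelle}) does the real work; the surjectivity of $\sigma$ guaranteed by the irreducibility of $A$ is what makes $\sigma_*$ surjective on each cylinder level and keeps $\hat A$ invertible.
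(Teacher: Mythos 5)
Note that the paper does not prove this theorem: it is quoted verbatim from \cite[Theorem 4.14]{Ma12PLMS}, so there is no in-paper proof to compare against. Your argument is correct and is essentially the proof given in that reference: one passes to the $\Z$-valued cocycle $c(x,n,y)=n$, identifies the kernel (equivalently, the skew product $\G_{(V,E)}\times_c\Z$, which is Kakutani equivalent to it) as an AF groupoid with $H_0\cong\varinjlim(\Z^V,A^t)$ and vanishing higher homology, and then applies the long exact sequence for $\Z$-cocycles together with the standard identification of $\Ker(\id-\hat A)$ and $\Coker(\id-\hat A)$ on the inductive limit with $\Ker(I-A^t)$ and $\Coker(I-A^t)$ on $\Z^V$.
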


Now, the map $\xi:\G_{(V,E)}\to\Z$ defined by $\xi(x,n,y):=n$ 
is a homomorphism, 
and so it gives rise to the cohomology class $[\xi]\in H^1(\G_{(V,E)})$. 
We would like to compute the map 
\[
\cdot\frown[\xi]\ :H_1(\G_{(V,E)})\to H_0(\G_{(V,E)}). 
\]
Suppose that $a=(a(v))_v\in\Z^V$ belongs to $\Ker(I-A^t)$, i.e. 
\begin{equation}
a(v)=\sum_{w\in V}A^t(v,w)a(w)=\sum_{w\in V}A(w,v)a(w)
=\sum_{t(e)=v}a(i(e)) \label{ainKer}
\end{equation}
for every $v\in V$. 
Define $f\in C_c(\G_{(V,E)},\Z)$ by 
\[
f(g):=\begin{cases}a(i(x_1))&\text{$g=(x,1,\sigma(x))$ for some $x\in X$}\\
0&\text{else.}\end{cases}
\]
One has 
\[
r_*(f)(x)=a(i(x_1))
\]
and 
\[
s_*(f)(x)=\sum_{y\in\sigma^{-1}(x)}f(y,1,x)
=\sum_{t(e)=i(x_1)}a(i(e))
\]
for every $x\in X$. 
Hence, by \eqref{ainKer}, $f$ is a cycle, 
and $[f]\in H_1(\G_{(V,E)})$ corresponds to $a\in\Ker(I-A^t)$. 

It is easy to see 
\[
f\frown\xi=s_*(f), 
\]
and so the homology class $[f]\frown[\xi]$ is equal to $[r_*(f)]$. 
As seen above, $r_*(f)(x)=a(i(x_1))$. 
Therefore, 
$[r_*(f)]\in H_0(\G_{(V,E)})$ corresponds to $a\in\Z^V$. 
Consequently, we get the following theorem. 

\begin{theorem}
Let $\G_{(V,E)}$ be the SFT groupoid as above and 
let $\xi:\G_{(V,E)}\to\Z$ be the homomorphism 
defined by $\xi(x,n,y):=n$. 
Then the map 
\[
\cdot\frown[\xi]\ :H_1(\G_{(V,E)})\to H_0(\G_{(V,E)})
\]
is equal to 
\[
\Ker(I-A^t)\ni a\mapsto a+\Im(I-A^t)\in\Coker(I-A^t). 
\]
\end{theorem}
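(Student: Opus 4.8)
The plan is to reduce everything to one explicit cycle computation, exploiting the concrete $m=n=1$ model for the cap product together with the known identifications of $H_1(\G_{(V,E)})$ and $H_0(\G_{(V,E)})$ from Theorem 4.14. First I would recall how those isomorphisms are realized on representatives. Write $C_v:=\{x\in X\mid i(x_1)=v\}$ for the clopen cylinder of sequences whose first edge starts at $v$; these partition $X$, and under $H_0(\G_{(V,E)})\cong\Coker(I-A^t)$ the class $[1_{C_v}]$ corresponds to the basis vector $\delta_v$ of $\Z^V$ modulo $\Im(I-A^t)$. On the degree-one side, an element $a=(a(v))_v\in\Ker(I-A^t)$ is to be represented by the function $f$ supported on the bisection $\{(x,1,\sigma(x))\mid x\in X\}$ with $f(x,1,\sigma(x))=a(i(x_1))$.

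Next I would verify that $f$ is genuinely a cycle carrying the class $a$. Since $d^{(1)}_0=s$ and $d^{(1)}_1=r$, one has $\partial_1 f=s_*(f)-r_*(f)$. Computing the two pushforwards gives $r_*(f)(x)=a(i(x_1))$ and $s_*(f)(x)=\sum_{t(e)=i(x_1)}a(i(e))$, and the defining relation \eqref{ainKer} of $\Ker(I-A^t)$, read off at the vertex $v=i(x_1)$, forces these to agree. Hence $\partial_1 f=0$, and under the identification above $[f]$ is exactly $a$.

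Then comes the cap product itself, which I expect to be the easy step. Using the $m=n=1$ instance of the formula in the definition preceding Theorem \ref{defofcap}, $(f\frown\xi)(x)=\sum_{s(g)=x}f(g)\xi(g)$. Because $f$ is supported on elements $(x,1,\sigma(x))$ on which $\xi$ takes the constant value $n=1$, the cocycle simply disappears and $f\frown\xi=s_*(f)$. But $f$ is a cycle, so $s_*(f)=r_*(f)$ on the nose, giving $f\frown\xi=r_*(f)$. Since $r_*(f)=\sum_v a(v)1_{C_v}$, its class in $H_0(\G_{(V,E)})\cong\Coker(I-A^t)$ is $a+\Im(I-A^t)$, which is the asserted value.

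The main obstacle I anticipate is not the cap product computation --- that collapses immediately once one notices $\xi\equiv1$ on the support of $f$ --- but the bookkeeping needed to pin down the two isomorphisms of Theorem 4.14 on the specific representatives $f$ and $r_*(f)$. One must unwind Matui's identifications carefully enough to be sure that the cycle $f$ really corresponds to $a$ in $\Ker(I-A^t)$ and that $[1_{C_v}]$ really corresponds to $\delta_v$ in $\Coker(I-A^t)$, with matching conventions (notably $A$ versus $A^t$, and initial versus terminal vertices). Once those correspondences are fixed, the displayed equalities $f\frown\xi=s_*(f)=r_*(f)$ chain together with the $\Ker/\Coker$ dictionary to yield the theorem.
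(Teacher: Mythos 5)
Your proposal is correct and follows essentially the same route as the paper: it constructs the same representative cycle $f$ supported on $\{(x,1,\sigma(x))\mid x\in X\}$, uses the relation defining $\Ker(I-A^t)$ to check $\partial_1 f=s_*(f)-r_*(f)=0$, and then observes that $\xi\equiv 1$ on the support of $f$ so that $f\frown\xi=s_*(f)=r_*(f)$, whose class is $a+\Im(I-A^t)$. The only difference is that you spell out the identification $[1_{C_v}]\leftrightarrow\delta_v$ slightly more explicitly than the paper, which simply invokes the computation from \cite{Ma12PLMS}.
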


\subsection{$\Z^N$ actions}

In this subsection, we deal with actions of $\Z^N$. 
We denote the canonical basis of $\Z^N$ by $e_1,e_2,\dots,e_N$. 
Let $\phi:\Z^N\curvearrowright X$ be an action 
on a Cantor set $X$ 
and let $\G:=\Z^N\times X$ be the transformation groupoid 
(see Section 2.1). 

Let $E_i:=\{(e_i,x)\in\G\mid x\in X\}$. 
For every $\sigma\in\mathcal{S}_N$, we put 
\[
E_\sigma:=\left(E_{\sigma(1)}\times E_{\sigma(2)}
\times\dots\times E_{\sigma(N)}\right)
\cap\G^{(N)}, 
\]
and define $f\in C_c(\G^{(N)},\Z)$ by 
\[
f:=\sum_{\sigma\in\mathcal{S}_N}\sgn(\sigma)1_{E_\sigma}, 
\]
where $\sgn(\sigma)\in\{1,-1\}$ is 
the signature of the permutation $\sigma$. 
Then $f$ is a cycle, 
and it is well-known that $[f]$ induces isomorphisms 
\[
[f]\frown\cdot\ :H^i(\G)\to H_{N-i}(\G)
\]
for all $i=0,1,\dots,N$ (Poincar\'e duality). 

For $\xi\in C(\G,\Z)$ and $i=1,2,\dots,N$, 
we define $\xi_i\in C(X,\Z)$ by $\xi_i(x):=\xi(e_i,x)$. 
If $\xi$ is a cocycle, then 
\[
\xi_i+\xi_j\circ\phi_{e_i}=\xi_j+\xi_i\circ\phi_{e_j}
\]
holds for all $i,j=1,2,\dots,N$. 

Suppose that 
$\xi^{(1)},\xi^{(2)},\dots,\xi^{(N)}\in C(\G,\Z)$ are cocycles. 
We would like to describe the homology class 
\[
[f]\frown[\xi^{(1)}]\smile[\xi^{(2)}]\smile\dots\smile[\xi^{(N)}]
\in H_0(\G)
\]
in terms of $\xi_i^{(j)}$. 
For $\sigma\in\mathcal{S}_N$ and $i=1,2,\dots,N{-}1$, 
we let 
\[
e(\sigma,i):=\sum_{k=i+1}^Ne_{\sigma(k)}\in\Z^N, 
\]
and let $e(\sigma,N):=0$. 
Then, for any $(g_1,g_2,\dots,g_N)\in E_\sigma$, one has 
\[
(\xi^{(1)}\smile\xi^{(2)}\smile\dots\smile\xi^{(N)})
(g_1,g_2,\dots,g_N)
=\prod_{i=1}^N
\left(\xi_{\sigma(i)}^{(i)}\circ\phi_{e(\sigma,i)}\right)(x), 
\]
where $x:=s(g_N)\in X$. 
As a result, we have the following theorem. 

\begin{theorem}
Let $\phi:\Z^N\curvearrowright X$ be an action 
on a Cantor set $X$ 
and let $\G:=\Z^N\times X$ be the transformation groupoid. 
Let $f\in C_c(\G^{(N)},\Z)$ be as above. 
When $\xi^{(1)},\xi^{(2)},\dots,\xi^{(N)}\in C(\G,\Z)$ are cocycles, 
\[
(f\frown\xi^{(1)}\smile\xi^{(2)}\smile\dots\smile\xi^{(N)})(x)
=\sum_{\sigma\in\mathcal{S}_N}\sgn(\sigma)\prod_{i=1}^N
\left(\xi_{\sigma(i)}^{(i)}\circ\phi_{e(\sigma,i)}\right)(x)
\]
holds for every $x\in X$. 
For example, when $N=2$, we get 
\[
f\frown\xi^{(1)}\smile\xi^{(2)}
=(\xi_{1}^{(1)}\circ\phi_{e_2})\cdot\xi_{2}^{(2)}
-(\xi_{2}^{(1)}\circ\phi_{e_1})\cdot\xi_{1}^{(2)}. 
\]
\end{theorem}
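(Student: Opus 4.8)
The plan is to unwind both products from their definitions and reduce the computation to a single sum indexed by $\mathcal{S}_N$. Since each $\xi^{(j)}$ lives in $C(\G,\Z)=C(\G^{(1)},\Z)$, the iterated cup product $\xi^{(1)}\smile\cdots\smile\xi^{(N)}$ is a cochain in $C(\G^{(N)},\Z)$, and the cycle $f$ lies in $C_c(\G^{(N)},\Z)$; thus the relevant cap product is the case $m=n=N$, for which the definition in Section 3 gives
\[
\bigl(f\frown(\xi^{(1)}\smile\cdots\smile\xi^{(N)})\bigr)(x)
=\sum f(g_1,\dots,g_N)\cdot(\xi^{(1)}\smile\cdots\smile\xi^{(N)})(g_1,\dots,g_N),
\]
the sum being over all $(g_1,\dots,g_N)\in\G^{(N)}$ with $s(g_N)=x$. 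First I would substitute $f=\sum_\sigma\sgn(\sigma)1_{E_\sigma}$ and note that, because the first coordinates $e_{\sigma(1)},\dots,e_{\sigma(N)}$ of a string in $E_\sigma$ determine $\sigma$ uniquely, the sets $E_\sigma$ are pairwise disjoint; hence $f$ equals $\sgn(\sigma)$ on $E_\sigma$ and vanishes elsewhere, with no overlaps to track.

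The central step is to show that for each fixed $\sigma$ there is exactly one composable string in $E_\sigma$ with $s(g_N)=x$. Writing $g_k=(e_{\sigma(k)},x_k)$, the composability relations $s(g_k)=r(g_{k+1})$ translate, in the transformation groupoid, into $x_k=\phi_{e_{\sigma(k+1)}}(x_{k+1})$, and together with $x_N=x$ these recursively force $x_k=\phi_{e(\sigma,k)}(x)$, where $e(\sigma,k)=\sum_{j>k}e_{\sigma(j)}$. This is precisely the quantity $e(\sigma,i)$ introduced before the statement (with the convention $e(\sigma,N)=0$), and it pins down the unique contributing string $g_k=(e_{\sigma(k)},\phi_{e(\sigma,k)}(x))$.

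It then remains to evaluate the cup product on this string. Since each factor has degree $1$, the definition of $\smile$ factorizes as $(\xi^{(1)}\smile\cdots\smile\xi^{(N)})(g_1,\dots,g_N)=\prod_{i=1}^N\xi^{(i)}(g_i)$, and using $\xi^{(i)}_{\sigma(i)}(y)=\xi^{(i)}(e_{\sigma(i)},y)$ each factor becomes $(\xi^{(i)}_{\sigma(i)}\circ\phi_{e(\sigma,i)})(x)$. This recovers the intermediate identity displayed just before the theorem, and assembling the contributions over all $\sigma\in\mathcal{S}_N$ yields the asserted formula. The $N=2$ specialization then follows by listing the two permutations and reading off $e(\mathrm{id},1)=e_2$, $e(\mathrm{id},2)=0$, $e((12),1)=e_1$, $e((12),2)=0$.

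I would expect no genuine obstacle here: the argument is essentially organized bookkeeping once the two products are written out. The only point requiring real care is the recursive identification of the second coordinates $x_k$, where one must track the order of composition and the convention $e(\sigma,N)=0$ correctly, so that the resulting groupoid elements—and hence the shifted arguments $\phi_{e(\sigma,i)}(x)$—match the definition of $e(\sigma,i)$ exactly.
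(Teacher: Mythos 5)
Your proposal is correct and follows essentially the same route as the paper: the paper's argument is precisely the computation, displayed just before the theorem, of $(\xi^{(1)}\smile\cdots\smile\xi^{(N)})(g_1,\dots,g_N)=\prod_i(\xi^{(i)}_{\sigma(i)}\circ\phi_{e(\sigma,i)})(x)$ on the unique composable string in $E_\sigma$ with $s(g_N)=x$, combined with the $m=n$ form of the cap product and the decomposition $f=\sum_\sigma\sgn(\sigma)1_{E_\sigma}$. You in fact make explicit two points the paper leaves implicit (the disjointness of the sets $E_\sigma$ and the recursive identification $x_k=\phi_{e(\sigma,k)}(x)$), and both are handled correctly.
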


\subsection{The Penrose tiling}

Groupoids associated with tilings have been constructed 
in \cite{MR1359991} (see also \cite{KP00CRM}). 
For aperiodic, repetitive tilings with finite local complexity, 
the corresponding tiling groupoids are \'etale, minimal, 
and have unit spaces homeomorphic to the Cantor set. 
It is known that the cohomology of the groupoids 
are isomorphic to the \v{C}ech cohomology of the tiling space. 

In this subsection, 
we compute a cup product of cohomology of the Penrose tiling. 
The argument relies on the results in \cite{AP98ETDS}. 
Let $\Omega$ be the space of the Penrose tiling and 
let $\G$ be the associated ample groupoid. 
Our aim is to determine 
the cup product $H^1(\G)\times H^1(\G)\to H^2(\G)$. 
There exists a two-dimensional CW-complex $\Gamma$ 
and a continuous surjection $\gamma:\Gamma\to\Gamma$ 
such that $\Omega$ is homeomorphic to 
the inverse limit of $(\Gamma,\gamma)$. 
As mentioned in \cite[Section 10.4]{AP98ETDS}, 
$\gamma$ induces isomorphisms on $H^i(\Gamma)$, and so 
$H^i(\Omega)\cong H^i(\G)$ is naturally isomorphic to $H^i(\Gamma)$. 
Thus, it suffices to compute the cup product 
$H^1(\Gamma)\times H^1(\Gamma)\to H^2(\Gamma)$. 

\begin{figure}
\centering
\includegraphics[pagebox=cropbox,clip,scale=0.8]{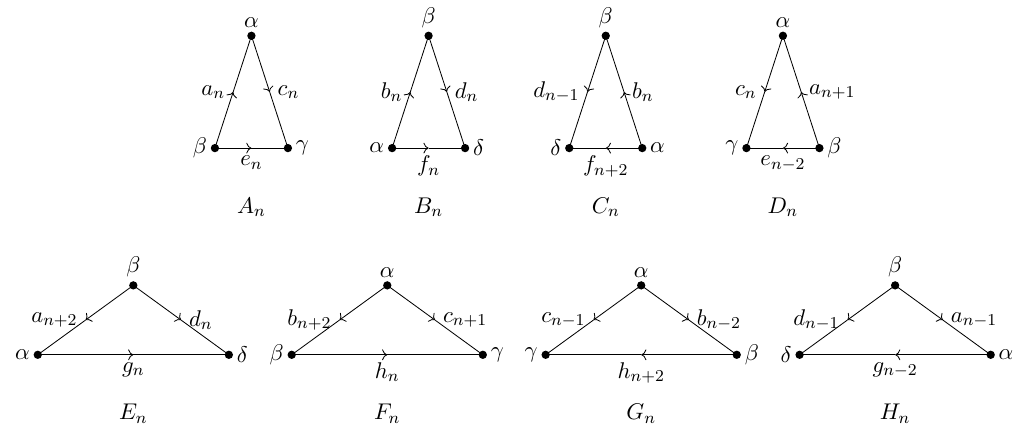}
\caption{the cell complex $\Gamma$}
\end{figure}

The cell complex $\Gamma$ has 40 faces, 40 edges, and four vertices: 
\[
F:=\{A_n,B_n,C_n,D_n,E_n,F_n,G_n,H_n\mid n=0,1,2,3,4\}, 
\]
\[
E:=\{a_n,b_n,c_n,d_n,e_n,f_n,g_n,h_n\mid n=0,1,2,3,4\}
\]
and $V:=\{\alpha,\beta,\gamma,\delta\}$. 
Figure 1 shows its faces and how they are glued together, 
where the index $n$ is understood modulo $5$. 
Let 
\[
\delta_1':\Z^V\to\Z^E\quad\text{and}\quad 
\delta_2':\Z^E\to\Z^F
\]
be the coboundary operators. 
Then, as computed in \cite[Section 10.4]{AP98ETDS}, we get 
\[
H^1(\Gamma)=\Ker\delta_2'/\Im\delta_1'\cong\Z^5
\]
and 
\[
H^2(\Gamma)=\Z^F/\Im\delta_2'\cong\Z^8. 
\]
We regard $\Z^E$ and $\Z^F$ 
as free abelian groups generated by $E$ and $F$, respectively. 
For $n=0,1,2,3,4$, we define $\xi_n\in\Z^E$ by 
\[
\xi_n:=a_n-b_{n+2}+c_{n+2}-d_{n+4}
+e_n+e_{n+2}-f_{n+2}-f_{n+4}-g_{n+3}-g_{n+4}+h_n+h_{n+1}. 
\]
Define $\eta\in\Z^E$ by 
\[
\eta:=\sum_{n=0}^4(a_n+e_n-g_n). 
\]
Then $[\xi_0],[\xi_1],[\xi_2],[\xi_3],[\eta]$ form a basis 
for $H^1(\Gamma)\cong\Z^5$, 
where the bracket means the equivalence class 
in $\Ker\delta_2'/\Im\delta_1'$. 
By the definition of the cup product for simplicial cohomology 
(see \cite{MR1867354} for instance), one has 
\[
\xi_0\smile\xi_1
=a_0\smile(-g_4)+(-b_2)\smile h_1
=H_1+G_4
\]
and 
\[
\xi_0\smile\xi_2
=a_0\smile c_4+(-b_2)\smile(-d_1)
=D_4+C_2. 
\]
Other products can be computed in the same way. 
Since $\Im\delta_2'\subset\Z^F$ is generated by 
\[
A_n-D_{n+2},\quad B_n-C_{n+3},\quad 
E_n-H_{n+2},\quad F_n-G_{n+3}
\]
\[
A_n-D_{n+4}-E_{n+3}+H_{n+1},\quad 
A_n-D_n+F_{n+4}-G_{n+1}
\]
\[
B_n-C_n-F_{n+3}+G_{n+2},\quad 
B_n-C_{n+1}+E_n-H_{n+1}, 
\]
we may choose 
\[
\left\{[A_0],[A_1],\dots,[A_4],[B_0],[E_0],[F_0]\right\}
\]
as a basis of $H^2(\Gamma)\cong\Z^8$. 
Therefore we get 
\[
[\xi_0]\smile[\xi_1]=[E_4]+[F_1]
=[A_0]+[A_1]-[A_2]-[A_3]+[E_0]+[F_0]
\]
and 
\[
[\xi_0]\smile[\xi_2]
=[A_2]+[B_4]=[A_3]+[B_0]. 
\]
Other products can be computed in the same way. 
In conclusion, we obtain the following theorem. 

\begin{theorem}
In the setting above, 
the cup product $H^1(\Gamma)\times H^1(\Gamma)\to H^2(\Gamma)$ 
is described as follows: 
\begin{align*}
[\xi_n]\smile[\xi_{n+1}]&=[A_0]+[A_1]-[A_2]-[A_3]+[E_0]+[F_0], \\
[\xi_n]\smile[\xi_{n+2}]&=[A_3]+[B_0], \\
[\xi_n]\smile[\xi_{n+3}]&=-[A_3]-[B_0], \\
[\xi_n]\smile[\eta]&=-[A_n]+[A_{n+2}]. 
\end{align*}
\end{theorem}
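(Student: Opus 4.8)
The plan is to compute each cup product first at the level of integral cochains in $\Z^F$, and then to rewrite the resulting $2$-cochain in the chosen basis $\{[A_0],\dots,[A_4],[B_0],[E_0],[F_0]\}$ of $H^2(\Gamma)=\Z^F/\Im\delta_2'$; the excerpt has already reduced the problem from $\G$ to the finite complex $\Gamma$. Before computing anything I would use two symmetries to cut the number of cases. First, the index shift $n\mapsto n+1$ is realized by a cellular automorphism $\tau$ of $\Gamma$, and naturality of the cup product gives $\tau^*([\xi_m]\smile[\xi_k])=[\xi_{m+1}]\smile[\xi_{k+1}]$, while $\tau^*\eta=\eta$ forces $\tau^*([\xi_m]\smile[\eta])=[\xi_{m+1}]\smile[\eta]$. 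Second, graded-commutativity on $H^1$ gives $[\alpha]\smile[\beta]=-[\beta]\smile[\alpha]$, so $[\xi_n]\smile[\xi_{n+3}]=-[\xi_{n+3}]\smile[\xi_{(n+3)+2}]$ is determined by the $[\xi_m]\smile[\xi_{m+2}]$ value. Consequently it suffices to compute the three products $[\xi_0]\smile[\xi_1]$, $[\xi_0]\smile[\xi_2]$, $[\xi_0]\smile[\eta]$ and then propagate by $\tau^*$ and commutativity.

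For the cochain computation I would use the $\Delta$-complex structure of $\Gamma$ and the simplicial cup-product formula of \cite{MR1867354}: on a $2$-simplex with ordered vertices $[v_0,v_1,v_2]$ one has $(\alpha\smile\beta)([v_0,v_1,v_2])=\alpha([v_0,v_1])\cdot\beta([v_1,v_2])$, so the contribution to a face comes solely from its front edge $[v_0,v_1]$, paired with $\alpha$, and its back edge $[v_1,v_2]$, paired with $\beta$. Expanding $\xi_0$ and $\xi_j$ in the edges and multiplying out bilinearly, the only surviving terms $e\smile e'$ are those where $e$ is the front edge and $e'$ the back edge of a single face, so each product collapses to a short sum of faces. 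The excerpt already records $\xi_0\smile\xi_1=H_1+G_4$ and $\xi_0\smile\xi_2=D_4+C_2$; the one remaining cochain computation $\xi_0\smile\eta$ is carried out identically, reading the relevant front/back incidences from Figure 1.

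The reduction modulo $\Im\delta_2'$ I would organize as a rewriting procedure driven by the listed generators. The four ``short'' relations $A_n-D_{n+2}$, $B_n-C_{n+3}$, $E_n-H_{n+2}$, $F_n-G_{n+3}$ eliminate $D_n,C_n,H_n,G_n$ in favour of $A_n,B_n,E_n,F_n$; substituting these into the four ``long'' relations collapses them into the telescoping recursions
\[
[E_{m+1}]-[E_m]=[A_{m+4}]-[A_{m+2}],\quad
[F_{m+1}]-[F_m]=[A_m]-[A_{m+2}],\quad
[B_{m+2}]-[B_m]=[A_{m+3}]-[A_m].
\]
Summing these expresses every $[E_n],[F_n],[B_n]$ in terms of $[E_0],[F_0],[B_0]$ and the $[A_k]$; applying them to $[H_1+G_4]=[E_4]+[F_1]$ and to $[D_4+C_2]=[A_2]+[B_4]$ yields $[A_0]+[A_1]-[A_2]-[A_3]+[E_0]+[F_0]$ and $[A_3]+[B_0]$, matching the first two lines of the theorem. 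The formula $[\xi_n]\smile[\eta]=-[A_n]+[A_{n+2}]$ then follows from the value at $n=0$ by applying $\tau^*$, and $[\xi_n]\smile[\xi_{n+3}]=-[A_3]-[B_0]$ from graded-commutativity as above.

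The main obstacle is the very first geometric step: correctly extracting, for each of the forty faces, which boundary edge is the front edge and which is the back edge, together with the orientation sign of the cell, since a single misread gluing in Figure 1 would corrupt the entire table. Everything downstream is routine integer linear algebra once these conventions are fixed. As a safeguard I would recompute one product that symmetry already predicts --- for instance $[\xi_1]\smile[\xi_2]$ directly at the cochain level --- and verify via the recursions above that it reduces to the same class as $[\xi_0]\smile[\xi_1]$, thereby confirming both the incidence data and the $n$-independence asserted in the first line of the theorem.
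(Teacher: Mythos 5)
Your proposal follows essentially the same route as the paper: compute the products at the cochain level via the simplicial front-edge/back-edge formula on the Anderson--Putnam complex $\Gamma$ (yielding $\xi_0\smile\xi_1=H_1+G_4$ and $\xi_0\smile\xi_2=D_4+C_2$), then reduce modulo $\Im\delta_2'$ to the chosen basis of $H^2(\Gamma)$ --- and your telescoping recursions reproduce exactly the paper's reductions $[E_4]+[F_1]=[A_0]+[A_1]-[A_2]-[A_3]+[E_0]+[F_0]$ and $[A_2]+[B_4]=[A_3]+[B_0]$. The only difference is organizational: where the paper simply states that the remaining products ``can be computed in the same way,'' you invoke the five-fold cellular symmetry and graded-commutativity of the cup product to cut the casework, which is a legitimate shortcut (and your consistency check that the answer for $[\xi_n]\smile[\xi_{n+1}]$ is shift-invariant indeed holds under the recursions).
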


\subsection{The Ammann tiling}

In the same way as the previous subsection, 
we compute the cup product $H^1\times H^1\to H^2$ 
for the Ammann tiling. 
Let $\Omega$ be the space of the Ammann tiling and 
let $\G$ be the associated ample groupoid. 
There exists a two-dimensional CW-complex $\Gamma$ 
and a continuous surjection $\gamma:\Gamma\to\Gamma$ 
such that $\Omega$ is homeomorphic to 
the inverse limit of $(\Gamma,\gamma)$. 
As mentioned in \cite[Section 10.3]{AP98ETDS}, 
$\gamma$ induces isomorphisms on $H^i(\Gamma)$, and so 
$H^i(\Omega)\cong H^i(\G)$ is naturally isomorphic to $H^i(\Gamma)$. 
Thus, it suffices to compute the cup product 
$H^1(\Gamma)\times H^1(\Gamma)\to H^2(\Gamma)$. 

The cell complex $\Gamma$ has eight faces, eight edges, 
and three vertices: 
\[
F:=\{A,B,C,D,E,F,G,H\},\quad 
E:=\{a,b,c,d,e,f,g,h\}
\]
and $V:=\{\alpha,\beta,\gamma\}$. 
The left-hand side of \cite[Figure 4]{AP98ETDS} shows 
its faces and how they are glued together. 
Let 
\[
\delta_1':\Z^V\to\Z^E\quad\text{and}\quad 
\delta_2':\Z^E\to\Z^F
\]
be the coboundary operators. 
They are represented by the following matrices: 
\[
\delta_1'=
\begin{bmatrix}-1&0&1\\-1&0&1\\
-1&1&0\\-1&1&0\\
0&1&-1\\0&1&-1\\
1&0&-1\\1&0&-1\end{bmatrix},\qquad 
\delta_2'=
\begin{bmatrix}
1&1&-1&-1&1&1&0&0\\-1&-1&1&1&-1&-1&0&0\\
-1&-1&1&1&-1&-1&0&0\\1&1&-1&-1&1&1&0&0\\
0&0&1&1&-1&-1&1&1\\0&0&-1&-1&1&1&-1&-1\\
0&0&-1&-1&1&1&-1&-1\\0&0&1&1&-1&-1&1&1\\
\end{bmatrix}. 
\]
As computed in \cite[Section 10.3]{AP98ETDS}, we get 
\[
H^1(\Gamma)=\Ker\delta_2'/\Im\delta_1'\cong\Z^4
\]
and 
\[
H^2(\Gamma)=\Z^F/\Im\delta_2'\cong\Z^6. 
\]
We regard $\Z^E$ and $\Z^F$ 
as free abelian groups generated by $E$ and $F$, respectively. 
Define $\xi_1,\xi_2,\xi_3,\xi_4\in\Z^E$ by 
\[
\xi_1:=a-b,\quad \xi_2:=g-h, 
\]
\[
\xi_3:=a+c-g,\quad \xi_4:=a-e-g. 
\]
Then $[\xi_1],[\xi_2],[\xi_3],[\xi_4]$ form a basis 
for $H^1(\Gamma)\cong\Z^4$, 
where the bracket means the equivalence class 
in $\Ker\delta_2'/\Im\delta_1'$. 
Since $\Im\delta_2'\subset\Z^F$ is generated by 
\[
A-B-C+D\quad\text{ and }\quad E-F-G+H, 
\]
we may choose 
\[
\left\{[A],[B],[D],[E],[F],[H]\right\}
\]
as a basis of $H^2(\Gamma)\cong\Z^6$. 
In a similar fashion to the Penrose tiling, 
one can compute the cup product and obtain the following. 

\begin{theorem}
In the setting above, 
the cup product $H^1(\Gamma)\times H^1(\Gamma)\to H^2(\Gamma)$ 
is described as follows: 
\begin{align*}
[\xi_1]\smile[\xi_2]&=2([A]+[D]+[E]+[H]), \\
[\xi_1]\smile[\xi_3]&=-3([A]+[D])-2([E]+[H]), \\
[\xi_1]\smile[\xi_4]&=-([A]+[D]+[E]+[H]), \\
[\xi_2]\smile[\xi_3]&=-([A]+[D]+[E]+[H]), \\
[\xi_2]\smile[\xi_4]&=-2([A]+[D])-([E]+[H]), \\
[\xi_3]\smile[\xi_4]&=2([A]+[D])+([E]+[H]). 
\end{align*}
\end{theorem}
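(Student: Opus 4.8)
The plan is to follow exactly the template established in the Penrose tiling computation, since the Ammann setup is structurally identical: we have explicit generators $[\xi_1],\dots,[\xi_4]$ of $H^1(\Gamma)\cong\Z^4$ realized as cocycles in $\Ker\delta_2'\subset\Z^E$, an explicit basis of $H^2(\Gamma)\cong\Z^6$, and the combinatorial cup-product formula for simplicial cohomology (as cited in \cite{MR1867354}). First I would fix a consistent orientation and ordering on the vertices $\{\alpha,\beta,\gamma\}$ and on the edges and faces, reading off from \cite[Figure 4]{AP98ETDS} which ordered pair of edges bounds each oriented face $A,\dots,H$. Concretely, for each face one records a decomposition of its boundary into a ``first half'' edge and a ``second half'' edge, so that for $1$-cochains $u,v\in\Z^E$ the cup product $u\smile v\in\Z^F$ is given on a face by $(u\smile v)(\text{face})=u(\text{first edge})\cdot v(\text{second edge})$, with signs dictated by the chosen orientations.

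Next, using this pairing I would compute each of the six products $\xi_i\smile\xi_j$ at the cochain level in $\Z^F$. Since $\xi_1=a-b$, $\xi_2=g-h$, $\xi_3=a+c-g$, $\xi_4=a-e-g$ have small support, each cup product is a short sum of terms of the form $\pm(\text{edge})\smile(\text{edge})$, evaluated against the faces via the boundary data. This step is mechanical but must be done carefully, since the antisymmetry of $\smile$ on $H^1$ (coboundaries aside) and the interaction with the coboundary $\delta_2'$ mean the raw cochain answer may contain terms lying in $\Im\delta_2'$. The key organizing fact is that $\Im\delta_2'$ is generated by $A-B-C+D$ and $E-F-G+H$, so after computing each product in $\Z^F$ I would reduce modulo these two relations and re-express the result in the chosen basis $\{[A],[B],[D],[E],[F],[H]\}$. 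One expects the pairs $\{A,D\}$ and $\{E,H\}$ to group together (as reflected in the stated answer, where every entry is a combination of $[A]+[D]$ and $[E]+[H]$), which serves as a strong internal consistency check.

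The main obstacle I anticipate is \emph{not} conceptual but bookkeeping: getting the orientations and the edge-ordering on each face exactly right so that the signs in the simplicial cup product come out consistently. Because the cup product depends a priori on the ordering of vertices in each cell, and the identifications of edges in the inverse-limit CW-complex $\Gamma$ are subtle, an error in a single boundary incidence would corrupt several of the six products at once. To guard against this I would use the antisymmetry relation $[\xi_i]\smile[\xi_j]=-[\xi_j]\smile[\xi_i]$ in $H^2$ (valid here since these are degree-one classes over $\Z$) together with the bilinearity and the known linear relations among $\xi_1,\dots,\xi_4$ as cochains; for instance, any $\Z$-linear dependence modulo $\Im\delta_1'$ among the $\xi_i$ must be respected by all their cup products, giving redundant equations that pin down the signs. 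Finally, I would assemble the six reduced classes into the table stated in the theorem, verifying that each lands in the span of $[A]+[D]$ and $[E]+[H]$ with the asserted integer coefficients.
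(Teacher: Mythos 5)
Your plan is essentially the paper's own proof: for the Ammann case the paper's entire argument is the single sentence that the computation proceeds ``in a similar fashion to the Penrose tiling,'' i.e.\ evaluate the simplicial cup product of the explicit cocycles $\xi_1,\dots,\xi_4$ face by face and reduce modulo $\Im\delta_2'$, generated by $A-B-C+D$ and $E-F-G+H$, in the basis $\{[A],[B],[D],[E],[F],[H]\}$, exactly as you describe. One caveat on your step of recording a single ``first half edge'' and ``second half edge'' per face: unlike the Penrose complex, whose faces are triangles, each Ammann face has a six-edge boundary (every row of $\delta_2'$ has six nonzero entries), so the cup product of two $1$-cochains on a face is not one product of two edge values but a sum over consecutive edge pairs coming from a triangulation of the hexagonal face (equivalently, over front/back pairs along the ordered boundary path); this is where coefficients such as $2$ and $3$ in the stated table arise, and a naive one-pair-per-face evaluation would not reproduce them. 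With that adjustment, your bookkeeping scheme, the reduction modulo $\Im\delta_2'$, and your antisymmetry and bilinearity consistency checks yield the stated result by the same route as the paper.
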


\newcommand{\noopsort}[1]{}


\begin{thebibliography}{10}

\bibitem{ADRS21JGP}
E.~Akkermans, Y.~Don, J.~Rosenberg, and C.~L. Schochet, {\em Relating
  diffraction and spectral data of aperiodic tilings: towards a {B}loch
  theorem}, J. Geom. Phys., 165 (2021), pp.~Paper No. 104217, 23.

\bibitem{AP98ETDS}
J.~E. Anderson and I.~F. Putnam, {\em Topological invariants for substitution
  tilings and their associated {$C^*$}-algebras}, Ergodic Theory Dynam.
  Systems, 18 (1998), pp.~509--537.

\bibitem{BBG06CMP}
J.~Bellissard, R.~Benedetti, and J.-M. Gambaudo, {\em Spaces of tilings, finite
  telescopic approximations and gap-labeling}, Comm. Math. Phys., 261 (2006),
  pp.~1--41.

\bibitem{BKL01CRASParis}
J.~Bellissard, J.~Kellendonk, and A.~Legrand, {\em Gap-labelling for
  three-dimensional aperiodic solids}, C. R. Acad. Sci. Paris S\'er. I Math.,
  332 (2001), pp.~521--525.

\bibitem{MR2018220}
M.-T. Benameur and H.~Oyono-Oyono, {\em Gap-labelling for quasi-crystals
  (proving a conjecture by {J}. {B}ellissard)}, in Operator algebras and
  mathematical physics ({C}onstan\c{t}a, 2001), Theta, Bucharest, 2003,
  pp.~11--22.

\bibitem{BDGW23PLMS}
C.~B\"onicke, C.~Dell'Aiera, J.~Gabe, and R.~Willett, {\em Dynamic asymptotic
  dimension and {M}atui's {HK} conjecture}, Proc. Lond. Math. Soc. (3), 126
  (2023), pp.~1182--1253.

\bibitem{Cr00thesis}
M.~Crainic, {\em Cyclic cohomology and characteristic classes for foliations},
  Ph.D. thesis, Universiteit Utrecht,  (2000).

\bibitem{CM9905arXiv}
M.~Crainic and I.~Moerdijk, {\em A homology theory for \'{e}tale groupoids},
  preprint. \ arXiv:math/9905011.

\bibitem{CM00crelle}
M.~Crainic and I.~Moerdijk, {\em A homology theory for \'{e}tale groupoids}, J.
  Reine Angew. Math., 521 (2000), pp.~25--46.

\bibitem{MR0743629}
P.~de~la Harpe and G.~Skandalis, {\em D\'{e}terminant associ\'{e} \`a une trace
  sur une alg\'{e}bre de {B}anach}, Ann. Inst. Fourier (Grenoble), 34 (1984),
  pp.~241--260.

\bibitem{FKPS19Munster}
C.~Farsi, A.~Kumjian, D.~Pask, and A.~Sims, {\em Ample groupoids: equivalence,
  homology, and {M}atui's {HK} conjecture}, M\"{u}nster J. Math., 12 (2019),
  pp.~411--451.

\bibitem{GHK13AGT}
F.~G\"ahler, J.~Hunton, and J.~Kellendonk, {\em Integral cohomology of rational
  projection method patterns}, Algebr. Geom. Topol., 13 (2013), pp.~1661--1708.

\bibitem{MR345092}
R.~Godement, {\em Topologie alg\'ebrique et th\'eorie des faisceaux}, vol.~XIII
  of Publications de l'Institut de Math\'ematique de l'Universit\'e{} de
  Strasbourg, Hermann, Paris, 1973.
\newblock Troisi\`eme \'edition revue et corrig\'ee, Actualit\'es Scientifiques
  et Industrielles, No. 1252. [Current Scientific and Industrial Topics].

\bibitem{MR660658}
A.~Haefliger, {\em Differential cohomology}, in Differential topology
  ({V}arenna, 1976), Liguori, Naples, 1979, pp.~19--70.

\bibitem{MR1867354}
A.~Hatcher, {\em Algebraic topology}, Cambridge University Press, Cambridge,
  2002.

\bibitem{KP03Michigan}
J.~Kaminker and I.~Putnam, {\em A proof of the gap labeling conjecture},
  Michigan Math. J., 51 (2003), pp.~537--546.

\bibitem{MR1359991}
J.~Kellendonk, {\em Noncommutative geometry of tilings and gap labelling}, Rev.
  Math. Phys., 7 (1995), pp.~1133--1180.

\bibitem{KP00CRM}
J.~Kellendonk and I.~F. Putnam, {\em Tilings, {$C^*$}-algebras, and
  {$K$}-theory}, in Directions in mathematical quasicrystals, vol.~13 of CRM
  Monogr. Ser., Amer. Math. Soc., Providence, RI, 2000, pp.~177--206.

\bibitem{KN2107arXiv}
D.~Kerr{\noopsort{2107.05273}} and P.~Naryshkin, {\em Elementary amenability
  and almost finiteness}, preprint. \ arXiv:2107.05273.

\bibitem{KK01JOP}
A.~Kishimoto and A.~Kumjian, {\em The {E}xt class of an approximately inner
  automorphism. {II}}, J. Operator Theory, 46 (2001), pp.~99--122.

\bibitem{Li15JFA}
X.~Li, {\em A new approach to recent constructions of {$C^*$}-algebras from
  modular index theory}, J. Funct. Anal., 269 (2015), pp.~841--864.

\bibitem{Li09AJM}
H.~Lin, {\em Asymptotically unitary equivalence and asymptotically inner
  automorphisms}, Amer. J. Math., 131 (2009), pp.~1589--1677.

\bibitem{Ma01CJM}
H.~Matui, {\em Ext and {O}rder{E}xt classes of certain automorphisms of
  {$C^*$}-algebras arising from {C}antor minimal systems}, Canad. J. Math., 53
  (2001), pp.~325--354.

\bibitem{Ma11JFA}
H.~Matui, {\em Classification of homomorphisms into simple
  {$\mathcal{Z}$}-stable {$C^\ast$}-algebras}, J. Funct. Anal., 260 (2011),
  pp.~797--831.

\bibitem{Ma12PLMS}
H.~Matui, {\em Homology and topological full groups of \'etale groupoids on
  totally disconnected spaces}, Proc. Lond. Math. Soc. (3), 104 (2012),
  pp.~27--56.

\bibitem{Ma15crelle}
H.~Matui, {\em Topological full groups of one-sided shifts of finite type}, J.
  Reine Angew. Math., 705 (2015), pp.~35--84.

\bibitem{Ma16Adv}
H.~Matui, {\em {\'E}tale groupoids arising from products of shifts of finite
  type}, Adv. Math., 303 (2016), pp.~502--548.

\bibitem{Mo98Topology}
I.~Moerdijk, {\em Proof of a conjecture of {A}. {H}aefliger}, Topology, 37
  (1998), pp.~735--741.

\bibitem{Ni24CJM}
Z.~Niu, {\em Radius of comparison and mean topological dimension: {$\Bbb
  Z^d$}-actions}, Canad. J. Math., 76 (2024), pp.~1240--1266.

\bibitem{Or20JNG}
E.~Ortega, {\em The homology of the {K}atsura-{E}xel-{P}ardo groupoid}, J.
  Noncommut. Geom., 14 (2020), pp.~913--935.

\bibitem{OS22MathScand}
E.~Ortega and A.~Sanchez, {\em The homology of the groupoid of the self-similar
  infinite dihedral group}, Math. Scand., 128 (2022), pp.~255--277.

\bibitem{OS23GGD}
E.~Ortega and E.~Scarparo, {\em Almost finiteness and homology of certain
  non-free actions}, Groups Geom. Dyn., 17 (2023), pp.~77--90.

\bibitem{Ph00DocM}
N.~C. Phillips, {\em A classification theorem for nuclear purely infinite
  simple {$C^*$}-algebras}, Doc. Math., 5 (2000), pp.~49--114.

\bibitem{PY22ETDS}
V.~Proietti and M.~Yamashita, {\em Homology and {$K$}-theory of dynamical
  systems {I}. {T}orsion-free ample groupoids}, Ergodic Theory Dynam. Systems,
  42 (2022), pp.~2630--2660.

\bibitem{Re_text}
J.~Renault, {\em A groupoid approach to {$C^{\ast} $}-algebras}, vol.~793 of
  Lecture Notes in Mathematics, Springer, Berlin, 1980.

\bibitem{R08Irish}
J.~Renault, {\em Cartan subalgebras in {$C^*$}-algebras}, Irish Math. Soc.
  Bull.,  (2008), pp.~29--63.

\bibitem{Ta2304arXiv}
O.~Tanner{\noopsort{2304.13691}}, {\em Rigidity, generators and homology of
  interval exchange groups}, preprint. \ arXiv:2304.13691.

\bibitem{Yi20BullAust}
I.~Yi, {\em Homology and {M}atui's {HK} conjecture for groupoids on
  one-dimensional solenoids}, Bull. Aust. Math. Soc., 101 (2020), pp.~105--117.

\end{thebibliography}
\end{document}